\newtheorem{thm}{Theorem}
\newtheorem{cor}[thm]{Corollary}
\newtheorem{remark}[thm]{Remark}
\newtheorem{lemma}[thm]{Lemma}
\newtheorem{prop}[thm]{Proposition}
\newtheorem{exam}[thm]{Example}
\newtheorem{defn}[thm]{Definition}
\newcommand{\bra}[1]{\langle #1 |}
\newcommand{\ket}[1]{| #1 \rangle}
\newcommand{\braket}[2]{\langle #1 | #2 \rangle}
\newcommand{\ketbra}[2]{| #1 \rangle\langle #2 |}
\newcommand{\Tr}{{\rm Tr}}
\newcommand{\bb}[1]{\mathbb{#1}}
\newcommand{\cl}[1]{\mathcal{#1}}
\newcommand{\mnorm}[1]{%
\left\vert\kern-0.9pt\left\vert\kern-0.9pt\left\vert #1
\right\vert\kern-0.9pt\right\vert\kern-0.9pt\right\vert}
\newcommand{\bigmnorm}[1]{%
\big\vert\kern-0.9pt\big\vert\kern-0.9pt\big\vert #1
\big\vert\kern-0.9pt\big\vert\kern-0.9pt\big\vert}
\title[Operator Spaces and Operator Systems in Entanglement Theory]{Minimal and Maximal Operator Spaces and Operator Systems in Entanglement Theory}
\author[N.~Johnston]{Nathaniel Johnston}
\address{Department of Mathematics and Statistics, University of Guelph,
Guelph, Ontario N1G 2W1, Canada}
\email{njohns01@uoguelph.ca}
\author[D.~W.~Kribs]{David W. Kribs}
\address{Department of Mathematics and Statistics, University of Guelph, Guelph, Ontario N1G 2W1, Canada and Institute for Quantum Computing, University of Waterloo, Waterloo, Ontario N2L 3G1, Canada}
\email{dkribs@uoguelph.ca}
\author[V.~I.~Paulsen]{Vern I.~Paulsen}
\address{Department of Mathematics, University of Houston,
Houston, Texas 77204-3476, U.S.A.}
\email{vern@math.uh.edu}
\author[R.~Pereira]{Rajesh Pereira}
\address{Department of Mathematics and Statistics, University of Guelph, Guelph, Ontario N1G 2W1, Canada}
\email{pereirar@uoguelph.ca}
\begin{document}

\begin{abstract}
We examine $k$-minimal and $k$-maximal operator spaces and operator systems, and investigate their relationships with the separability problem in quantum information theory. We show that the matrix norms that define the $k$-minimal operator spaces are equal to a family of norms that have been studied independently as a tool for detecting $k$-positive linear maps and bound entanglement. Similarly, we investigate the $k$-super minimal and $k$-super maximal operator systems that were recently introduced and show that their cones of positive elements are exactly the cones of $k$-block positive operators and (unnormalized) states with Schmidt number no greater than $k$, respectively. We characterize a class of norms on the $k$-super minimal operator systems and show that the completely bounded versions of these norms provide a criterion for testing the Schmidt number of a quantum state that generalizes the recently-developed separability criterion based on trace-contractive maps.\medskip

\noindent {\bf Keywords:} operator space, operator system, quantum information theory, entanglement
\end{abstract}

\maketitle

\section{Introduction}

A primary goal of this paper is to formally link central areas of study in operator theory and quantum information theory. More specifically, we connect recent investigations in operator space and operator system theory \cite{Paulsentext,Pistext} on the one hand and the theory of entanglement \cite{HHH09,BZtext} on the other. As benefits of this combined perspective, we obtain new results and new elementary proofs in both areas. We give further details below before proceeding.

Given a (classical description of a) quantum state $\rho$, one of the most basic open questions in quantum information theory asks for an operational criterion for determining whether $\rho$ is separable or entangled. Much progress has been made on this front over the past two decades. For instance, a revealing connection between the separability problem and operator theory was established in \cite{HHH96}, where it was shown that $\rho$ is separable if and only if it remains positive under the application of any positive map to one half of the state. Another more recent approach characterizes separability via maps that are contractive in the trace norm on Hermitian operators \cite{HHH06}. In this work we show that these two approaches to the separability problem can be seen as arising from the theory of minimal and maximal operator systems and operator spaces, respectively.

Additionally, this work can be seen as demonstrating how to rephrase certain positivity questions that are relevant in quantum information theory in terms of norms that are relevant in operator theory instead. For example, instead of using positive maps to detect separability of quantum states, we can construct a natural operator system into which positive maps are \emph{completely} positive. Then the completely bounded norm on that operator system serves as a tool for detecting separability of quantum states as well.

A natural generalization of the characterization of separable states in terms of positive linear maps was implicit in \cite{TH00} and proved in \cite{RA07} -- a state has Schmidt number no greater than $k$ if and only if it remains positive under the application of any $k$-positive map to one half of the state. Recently, a further connection was made between operator theory and quantum information: a map is completely positive on what is known as the maximal (resp. minimal) operator system on $M_n$, the space of $n \times n$ complex matrices, if and only if it is a positive (resp. entanglement-breaking \cite{HSR03}) map \cite{PTT09}. Thus, the maps that serve to detect quantum entanglement are the completely positive maps on the maximal operator system on $M_n$.

Similarly, completely positive maps on ``$k$-super maximal'' and ``$k$-super minimal'' operator systems on $M_n$ \cite{XThesis} have been studied and shown to be the same as $k$-positive and $k$-partially entanglement breaking maps \cite{CK06}, respectively. We will reprove these statements via an elementary proof that shows that the cones of positive elements that define the $k$-super maximal (resp. $k$-super minimal) operator systems are exactly the cones of (unnormalized) states with Schmidt number at most $k$ (resp. $k$-block positive operators).

Analogous to the minimal and maximal operator systems, there are minimal and maximal operator spaces (and appropriate $k$-minimal and $k$-maximal generalizations). We will show the norms that define the $k$-minimal operator spaces on $M_n$ coincide with a family of norms that have recently been studied in quantum information theory \cite{JK10a,JK10b,J10,PPHH10,CKo09,CKS09,DSSTT00} for their applications to the problems of detecting $k$-positive linear maps and NPPT bound entangled states. Furthermore, we will connect the dual of a version of the completely bounded minimal operator space norm to the separability problem and extend recent results about how trace-contractive maps can be used to detect entanglement. We will see that the maps that serve to detect quantum entanglement via norms are roughly the completely contractive maps on the minimal operator space on $M_n$. The natural generalization to norms that detect states with Schmidt number $k$ is proved via a stabilization result for the completely bounded norm from $M_r$ to the $k$-minimal operator space (or system) of $M_n$.

In Section~\ref{sec:QIT} we introduce the reader to the various relevant notions from quantum information theory such as separability and Schmidt rank. In Section~\ref{sec:op_space} we introduce (abstract) operator spaces and the $k$-minimal and $k$-maximal operator space structures, and investigate their relationship with norms that have been used in quantum information theory. In Section~\ref{sec:kMinOpSys} we give a similar treatment to abstract operator systems and the $k$-super minimal and $k$-super maximal operator system structures. We then investigate some norms on the $k$-super minimal operator system structures in Section~\ref{sec:OpSysNorms}. We close in Section~\ref{sec:CBNorm} by considering the completely bounded version of some of the norms that have been presented and establish a relationship with the Schmidt number of quantum states.

\section{Quantum Information Theory Preliminaries}\label{sec:QIT}

Given a vector space $V$, we will use $M_{m,p}(V)$ to denote the space of $m \times p$ matrices with elements from $V$. For brevity we will write $M_m(V) := M_{m,m}(V)$ and $M_m := M_m(\bb{C})$. It will occasionally be convenient to use tensor product notation and identify $M_m \otimes V \cong M_m(V)$ in the standard way, especially when $V = M_n$. We will make use of bra-ket notation from quantum mechanics as follows: we will use ``kets'' $\ket{v} \in \bb{C}^n$ to represent unit (column) vectors and ``bras'' $\bra{v} := \ket{v}^*$ to represent the dual (row) vectors, where $(\cdot)^*$ represents the conjugate transpose. Unit vectors represent pure quantum states (or more specifically, the projection $\ketbra{v}{v}$ onto the vector $\ket{v}$ represents a pure quantum state) and thus we will sometimes refer to unit vectors as states. Mixed quantum states are represented by \emph{density operators} $\rho \in M_m \otimes M_n$ that are positive semidefinite with $\Tr(\rho) = 1$.

A state $\ket{v} \in \bb{C}^m \otimes \bb{C}^n$ is called \emph{separable} if there exist $\ket{v_1} \in \bb{C}^m$, $\ket{v_2} \in \bb{C}^n$ such that $\ket{v} = \ket{v_1} \otimes \ket{v_2}$; otherwise it is said to be \emph{entangled}. The \emph{Schmidt rank} of a state $\ket{v}$, denoted $SR(\ket{v})$, is the least number separable states $\ket{v_i}$ needed to write $\ket{v} = \sum_i \alpha_i \ket{v_i}$, where $\alpha_i$ are some (real) coefficients. The analogue of Schmidt rank for a bipartite mixed state $\rho \in M_m \otimes M_n$ is \emph{Schmidt number} \cite{TH00}, denoted $SN(\rho)$, which is defined to be the least integer $k$ such that $\rho$ can be written in the form $\rho = \sum_i p_i \ketbra{v_i}{v_i}$ with $\{p_i\}$ forming a probability distribution and $SR(\ket{v_i}) \leq k$ for all $i$.

An operator $X = X^* \in M_m \otimes M_n$ is said to be \emph{$k$-block positive} (or a \emph{$k$-entanglement witness}) if $\bra{v}X\ket{v} \geq 0$ for all vectors $\ket{v}$ with $SR(\ket{v}) \leq k$. In the extreme case when $k = \min\{m,n\}$, we see that the $k$-block positive operators are exactly the positive semidefinite operators (since $SR(\ket{v}) \leq \min\{m,n\}$ for all vectors $\ket{v}$), and for smaller $k$ the set of $k$-block positive operators is strictly larger.

In \cite{JK10a,JK10b}, a family of operator norms that have several connections in quantum information theory was investigated. Arising from the Schmidt rank of bipartite pure states, they are defined for operators $X \in M_m \otimes M_n$ as follows:
\begin{align}\label{eq:SchmidtNorm}
	\big\|X\big\|_{S(k)} = \sup_{\ket{v},\ket{w}}\big\{ | \bra{v} X \ket{w} | : SR(\ket{v}),SR(\ket{w}) \leq k \big\}.
\end{align}

\noindent These norms were shown to be useful for determining whether or not an operator is $k$-block positive, and also have applications to the problem of determining whether or not there exist bound entangled non-positive partial transpose states \cite{JK10a,PPHH10}. The problem of computing these norms was investigated in \cite{JK10b}.

The \emph{completely bounded norm} of a linear map $\Phi : M_r \rightarrow M_n$ is defined to be
\begin{align*}
	\big\|\Phi\big\|_{cb} := \sup_{m \geq 1} \Big\{ \big\| (id_m \otimes \Phi)(X) \big\| : X \in M_m(M_r) \text{ with } \big\|X\big\| \leq 1 \Big\}.
\end{align*}

\noindent It was shown by Smith \cite{S83} (and independently later by Kitaev \cite{Kit97} from the dual perspective) that it suffices to fix $m = n$ so that $\big\|\Phi\big\|_{cb} = \big\| id_n \otimes \Phi \big\|$. We will see in Section~\ref{sec:CBNorm} a connection between the norms $\big\| id_k \otimes \Phi \big\|$ for $1 \leq k \leq n$ and the norms~\eqref{eq:SchmidtNorm}.

\section{$k$-Minimal and $k$-Maximal Operator Spaces}\label{sec:op_space}

We will now present (abstract) operator spaces and the $k$-minimal and $k$-maximal operator space structures. An operator space is a vector space $V$ together with a family of $L^\infty$ matrix norms $\|\cdot\|_{M_m(V)}$ on $M_m(V)$ that make $V$ into a matrix normed space. That is, we require that if $A = (a_{ij}),B = (b_{ij}) \in M_{p,m}$ and $X = (x_{ij}) \in M_m(V)$ then $\big\|A \cdot X \cdot B^*\big\|_{M_p(V)} \leq \big\|A\big\|\big\|X\big\|_{M_m(V)}\big\|B\big\|$, where
\begin{align*}
	A \cdot X \cdot B^* := \Big(\sum_{k,\ell = 1}^m a_{ik}x_{k\ell}\overline{b_{j \ell}}\Big) \in M_p(V)
\end{align*}

\noindent and $\big\|A\big\|,\big\|B\big\|$ represent the operator norm on $M_{p,m}$. The $L^\infty$ requirement is that $\big\| X \oplus Y \big\|_{M_{m+p}(V)} = \max\big\{ \|X\|_{M_m(V)},\|Y\|_{M_p(V)}\big\}$ for all $X \in M_m(V), Y \in M_p(V)$.

When the particular operator space structure (i.e., the family of $L^\infty$ matrix norms) on $V$ is not important, we will denote the operator space simply by $V$. We will use $M_n$ itself to denote the ``standard'' operator space structure on $M_n$ that is obtained by associating $M_m(M_n)$ with $M_{mn}$ in the natural way and using the operator norm. For a more detailed introduction to abstract operator spaces, the interested reader is directed to \cite[Chapter 13]{Paulsentext}.

Given an operator space $V$ and a natural number $k$, one can define a new family of norms on $M_m(V)$ that coincide with the matrix norms on $M_m(V)$ for $1 \leq m \leq k$ and are minimal (or maximal) for $m > k$. We will use $MIN^k(V)$ and $MAX^k(V)$ to denote what are called the \emph{$k$-minimal operator space of $V$} and the \emph{$k$-maximal operator space of $V$}, respectively. For $X \in M_m(V)$ we will use $\big\|X\big\|_{M_m(MIN^k(V))}$ and $\big\|X\big\|_{M_m(MAX^k(V))}$ to denote the norms that define the $k$-minimal and $k$-maximal operator spaces of $V$, respectively.

For $X \in M_m(V)$ one can define the $k$-minimal and $k$-maximal operator space norms via
\begin{align}\label{eq:k_min_space}
	\big\|X\big\|_{M_m(MIN^k(V))} & := \sup \Big\{ \big\| (\Phi(X_{ij})) \big\| : \Phi : V \rightarrow M_k \text{ with } \big\| \Phi \big\|_{cb} \leq 1 \Big\} \text{ and} \\ \label{eq:k_max_space}
	\big\|X\big\|_{M_m(MAX^k(V))} & := \sup \Big\{ \big\| (\Phi(X_{ij})) \big\| : \Phi : V \rightarrow \cl{B}(\cl{H}) \text{ with } \big\| id_k \otimes \Phi \big\| \leq 1 \Big\}.
\end{align}

Indeed, the names of these operator space structures come from the facts that if $O(V)$ is any operator space structure on $V$ such that $\|\cdot\|_{M_m(V)} = \|\cdot\|_{M_m(O(V))}$ for $1 \leq m \leq k$ then $\|\cdot\|_{M_m(MIN^k(V))} \leq \mnorm{\cdot}_{M_m(O(V))} \leq \|\cdot\|_{M_m(MAX^k(V))}$ for all $m > k$. In the $k = 1$ case, these operator spaces are exactly the minimal and maximal operator space structures that are fundamental in operator space theory \cite[Chapter 14]{Paulsentext}. The interested reader is directed to \cite{OR04} and the references therein for further properties of $MIN^k(V)$ and $MAX^k(V)$ when $k \geq 2$.

One of the primary reasons for our interest in the $k$-minimal operator spaces is the following result, which says that the $k$-minimal norm on $M_m(M_n)$ is exactly equal to the $S(k)$-norm~\eqref{eq:SchmidtNorm} from quantum information theory.

\begin{thm}\label{thm:MINkChar}
	Let $X \in M_m(M_n)$. Then $\big\|X\big\|_{M_m(MIN^k(M_n))} = \big\|X\big\|_{S(k)}$.
\end{thm}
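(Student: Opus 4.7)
The plan is to prove the two inequalities $\|X\|_{S(k)} \leq \|X\|_{M_m(MIN^k(M_n))}$ and $\|X\|_{M_m(MIN^k(M_n))} \leq \|X\|_{S(k)}$ separately, in each case transporting information across the duality between completely contractive maps $M_n \to M_k$ and vectors of Schmidt rank at most $k$.

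For the easier direction $\|X\|_{S(k)} \leq \|X\|_{M_m(MIN^k(M_n))}$, given $\ket{v}, \ket{w} \in \bb{C}^m \otimes \bb{C}^n$ with Schmidt rank at most $k$, I would take Schmidt decompositions $\ket{v} = \sum_{i=1}^k \lambda_i \ket{a_i} \otimes \ket{b_i}$ and $\ket{w} = \sum_{j=1}^k \mu_j \ket{c_j} \otimes \ket{d_j}$ and use them to define isometries $V, W : \bb{C}^k \to \bb{C}^n$ by $V\ket{i} = \ket{b_i}$ and $W\ket{j} = \ket{d_j}$. The map $\Phi : M_n \to M_k$ given by $\Phi(A) := V^*AW$ then satisfies $\|\Phi\|_{cb} \leq \|V\|\|W\| \leq 1$, and $(\Phi(X_{ij})) = (I_m \otimes V)^* X (I_m \otimes W)$. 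Testing this against the unit vectors $\ket{\alpha} := \sum_i \lambda_i \ket{a_i} \otimes \ket{i}$ and $\ket{\beta} := \sum_j \mu_j \ket{c_j} \otimes \ket{j}$ in $\bb{C}^m \otimes \bb{C}^k$ reproduces $\bra{v} X \ket{w}$ exactly, witnessing the bound.

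For the reverse inequality, the main tool is the Wittstock--Paulsen--Haagerup representation of any $\Phi : M_n \to M_k$ with $\|\Phi\|_{cb} \leq 1$: since every $*$-representation of $M_n$ is unitarily equivalent to a multiple of the identity representation, we may write $\Phi(A) = V^*(I_s \otimes A)W$ for some integer $s$ and contractions $V, W : \bb{C}^k \to \bb{C}^s \otimes \bb{C}^n$. Correspondingly, $(id_m \otimes \Phi)(X) = (I_m \otimes V)^* Y (I_m \otimes W)$ where $Y := \sum_{ij} E_{ij} \otimes I_s \otimes X_{ij}$, and I would estimate $|\bra{\alpha}(id_m \otimes \Phi)(X)\ket{\beta}|$ for unit vectors $\ket{\alpha}, \ket{\beta} \in \bb{C}^m \otimes \bb{C}^k$. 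Setting $\ket{v'} := (I_m \otimes V)\ket{\alpha}$ and $\ket{w'} := (I_m \otimes W)\ket{\beta}$, both vectors have Schmidt rank at most $k$ across the bipartition $\bb{C}^m \, | \, \bb{C}^s \otimes \bb{C}^n$, because $\ket{\alpha}, \ket{\beta}$ automatically do (one factor being $k$-dimensional) and $I_m \otimes V$ preserves the $\bb{C}^m$-side structure.

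The hard part will be converting this intermediate Schmidt bound, which lives across a bipartition involving the ancillary $\bb{C}^s$, into an estimate by $\|X\|_{S(k)}$ on the original bipartition $\bb{C}^m \otimes \bb{C}^n$. I would handle this by slicing along the standard basis of $\bb{C}^s$: a direct index computation using the $I_s$ factor in $Y$ collapses $\bra{v'} Y \ket{w'}$ to $\sum_r \bra{v'_r} X \ket{w'_r}$, where $\ket{v'_r}, \ket{w'_r} \in \bb{C}^m \otimes \bb{C}^n$ are the $r$th slices. Expanding the rank-$k$ decomposition of $\ket{v'}$ in the $\bb{C}^s$ basis shows each $\ket{v'_r}$ itself has Schmidt rank at most $k$, so each summand is controlled by $\|X\|_{S(k)} \|v'_r\| \|w'_r\|$; a Cauchy--Schwarz in $r$ then yields the final bound $\|X\|_{S(k)} \|v'\| \|w'\| \leq \|X\|_{S(k)}$.
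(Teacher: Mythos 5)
Your proof is correct and takes essentially the same route as the paper's: the dilation form $\Phi(A) = V^*(I_s \otimes A)W$ is just the Kraus-type representation $\Phi(Y) = \sum_i A_i Y B_i^*$ that the paper invokes (your slices of $V^*$, $W^*$ along the standard basis of $\mathbb{C}^s$ are exactly the paper's $A_i$, $B_i$), and your slice-then-Cauchy--Schwarz estimate for the upper bound and your isometry construction for the lower bound match the paper's two directions step for step. No gaps.
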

\begin{proof}
	A fundamental result about completely bounded maps says (see \cite[Theorem 19]{JKP09}, for example) that any completely bounded map $\Phi : M_n \rightarrow M_k$ has a representation of the form
	\begin{align}\label{eq:cbform}
		\Phi(Y) = \sum_{i=1}^{nk} A_i Y B_i^* \quad \text{with } A_i, B_i \in M_{k,n} \, \text{ and } \, \Big\| \sum_{i=1}^{nk}A_i A_i^*\Big\|\Big\| \sum_{i=1}^{nk}B_i B_i^*\Big\| = \big\|\Phi\big\|_{cb}^2.
	\end{align}
		
	\noindent By using the fact that $\Phi$ is completely contractive (so $\big\|\Phi\big\|_{cb} = 1$) and a rescaling of the operators $\big\{A_i\big\}$ and $\big\{B_i\big\}$ we have
	\begin{align*}
		\big\|X\big\|_{M_m(MIN^k(M_n))} = \sup \Big\{ \big\| \sum_{i=1}^{nk} (I_m \otimes A_i) X (I_m \otimes B_i^*) \big\| : \big\| \sum_{i=1}^{nk}A_i A_i^*\big\| = \big\| \sum_{i=1}^{nk}B_i B_i^*\big\| = 1 \Big\},
	\end{align*}
	
	\noindent where the supremum is taken over all families of operators $\big\{ A_i \big\}, \big\{ B_i \big\} \subset M_{k,n}$ satisfying the normalization condition. Now define $\alpha_{ij}\ket{a_{ij}} := A_i^*\ket{j}$ and $\beta_{ij}\ket{b_{ij}} := B_i^*\ket{j}$, and let $\ket{v} = \sum_{j=1}^{k} \gamma_j \ket{c_j} \otimes \ket{j}, \ket{w} = \sum_{j=1}^{k} \delta_j \ket{d_j} \otimes \ket{j} \in \bb{C}^m \otimes \bb{C}^k$ be arbitrary unit vectors. Then simple algebra reveals
	\begin{align*}
		\nu_i\ket{v_i} & := (I_m \otimes A_i^*)\ket{v} = \sum_{j=1}^{k} \alpha_{ij} \gamma_j \ket{c_j} \otimes \ket{a_{ij}} \, \, \text{ and } \\
		\mu_i\ket{w_i} & := (I_m \otimes B_i^*)\ket{w} = \sum_{j=1}^{k} \beta_{ij} \delta_j \ket{d_j} \otimes \ket{b_{ij}}.
	\end{align*}
	
	\noindent In particular, $SR(\ket{v_i}),SR(\ket{w_i}) \leq k$ for all $i$. Furthermore, by the normalization condition on $\big\{A_i\big\}$ and $\big\{B_i\big\}$ we have that
	\begin{align}\label{eq:muNormalize}
		\bra{v}(I_m \otimes \sum_{i=1}^{nk} A_i A_i^*)\ket{v} = \sum_{i=1}^{nk} \nu_i^2 \leq 1 \, \, \, \text{ and } \, \, \, \bra{w}(I_m \otimes \sum_{i=1}^{nk} B_i B_i^*)\ket{w} = \sum_{i=1}^{nk} \mu_i^2 \leq 1.
	\end{align}
	
	\noindent Thus we can write
	\begin{align}\label{eq:cbschmidt}
		\left|\sum_{i=1}^{nk}\bra{v}(I_m \otimes A_i)(X)(I_m \otimes B_i^*)\ket{w}\right| = \left|\sum_{i=1}^{nk} \nu_i \mu_i \bra{v_i} X \ket{w_i}\right| \leq \sum_{i=1}^{nk} \nu_i \mu_i \big| \bra{v_i} X \ket{w_i}\big|.
	\end{align}
	
	\noindent The normalization condition~\eqref{eq:muNormalize} and the Cauchy-Schwarz inequality tell us that there is a particular $i^\prime$ such that the sum~\eqref{eq:cbschmidt} $\leq \left| \bra{v_{i^\prime}} X \ket{w_{i^\prime}}\right|$. Taking the supremum over all vectors $\ket{v}$ and $\ket{w}$ gives the ``$\leq$'' inequality.
	
	The ``$\geq$'' inequality can be seen by noting that if we have two vectors in their Schmidt decompositions $\ket{v} = \sum_{i=1}^k \alpha_i \ket{c_i} \otimes \ket{a_i}$ and $\ket{w} = \sum_{i=1}^k \beta_i \ket{d_i} \otimes \ket{b_i}$, then we can define operators $A,B \in M_{k,n}$ by setting their $i^{th}$ row in the standard basis to be $\bra{a_i}$ and $\bra{b_i}$, respectively. Because the rows of $A$ and $B$ form orthonormal sets, $\big\|A\big\| = \big\|B\big\| = 1$. Additionally, if we define $\ket{v^\prime} = \sum_{i=0}^{k-1} \alpha_i \ket{c_i} \otimes \ket{i}$ and $\ket{w^\prime} = \sum_{i=0}^{k-1} \beta_i \ket{d_i} \otimes \ket{i}$, then
	\begin{align*}
		\big\| (I_m \otimes A)(X)(I_m \otimes B^*) \big\| \geq \big| \bra{v^\prime}(I_m \otimes A)(X)(I_m \otimes B^*)\ket{w^\prime} \big| = \big| \bra{v}X\ket{w} \big|.
	\end{align*}
	
	\noindent Taking the supremum over all vectors $\ket{v},\ket{w}$ with $SR(\ket{v}),SR(\ket{w}) \leq k$ gives the result.
\end{proof}

\begin{remark}{\rm
	When working with an operator system (instead of an operator
        space) $V$, it is more natural to define the norm~\eqref{eq:k_min_space} by taking the supremum over all completely positive unital maps $\Phi: V \rightarrow M_k$ rather than all complete contractions (similarly, to define the norm~\eqref{eq:k_max_space} one would take the supremum over all $k$-positive unital maps rather than $k$-contractive maps). In this case, the $k$-minimal norm no longer coincides with the $S(k)$-norm on $M_m(M_n)$ but rather has the following slightly different form:
	\begin{align}\begin{split}\label{eq:k_min_system}
		\big\|X\big\|_{M_m(OMIN^k(M_n))} = \sup_{\ket{v},\ket{w}}\Big\{ \big| \bra{v} X \ket{w} \big| : & \ SR(\ket{v}),SR(\ket{w}) \leq k \text{ and} \\
			& \ \exists \, P \in M_m \text{ s.t. } (P \otimes I_n)\ket{v} = \ket{w} \Big\},
	\end{split}\end{align}
where the notation $OMIN^k(M_n)$ refers to a new operator system
structure that is being assigned to $M_n,$ which we discuss in detail
in the next section.
	
	\noindent Intuitively, this norm has the same interpretation as the norm~\eqref{eq:SchmidtNorm} except with the added restriction that the vectors $\ket{v}$ and $\ket{w}$ look the same on the second subsystems. We will examine this norm in more detail in Section~\ref{sec:OpSysNorms}. In particular, we will see in Theorem~\ref{thm:k_order_norm} that the norm~\eqref{eq:k_min_system} is a natural norm on the $k$-super minimal operator system structure (to be defined in Section~\ref{sec:kMinOpSys}), which plays an analogous role to the $k$-minimal operator space structure.
}\end{remark}

Now that we have characterized the $k$-minimal norm in a fairly concrete way, we turn our attention to the $k$-maximal norm. The following result is a direct generalization of a corresponding known characterization of the $MAX(V)$ norm \cite[Theorem 14.2]{Paulsentext}.
\begin{thm}\label{thm:kMaxNormChar}
	Let $V$ be an operator space and let $X \in M_m(V)$. Then
	\begin{align*}
		\big\| X \big\|_{M_m(MAX^k(V))} = \inf\Big\{ \big\|A\big\| \big\|B\big\| : & \ A,B \in M_{m,rk}, x_i \in M_k(V), \|x_i\|_{M_k(V)} \leq 1 \text{ with} \\
		& \ X = A\cdot{\rm diag}(x_1,\ldots,x_r)\cdot B^* \Big\},
	\end{align*}
	
	\noindent where ${\rm diag}(x_1,\ldots,x_r) \in M_{rk}(V)$ is the $r \times r$ block diagonal matrix with entries $x_1, \ldots , x_r$ down its diagonal, and the infimum is taken over all such decompositions of $X$.
\end{thm}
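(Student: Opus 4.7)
The plan is to prove equality by establishing both inequalities separately. Write $\nu(X)$ for the infimum on the right-hand side. The direction $\big\|X\big\|_{M_m(MAX^k(V))} \leq \nu(X)$ will follow by a direct computation from definition~\eqref{eq:k_max_space}. The reverse direction will follow by recognizing $\nu$ as an $L^\infty$-matrix norm family on $V$ that coincides with the original structure on $M_m(V)$ for $1 \leq m \leq k$, and then invoking the universal (maximality) property of $MAX^k(V)$ stated just after equation~\eqref{eq:k_max_space}.

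For the upper direction, fix any $\Phi : V \rightarrow \cl{B}(\cl{H})$ satisfying $\big\|id_k \otimes \Phi\big\| \leq 1$ and any decomposition $X = A \cdot \mathrm{diag}(x_1,\ldots,x_r) \cdot B^*$ with $\big\|x_i\big\|_{M_k(V)} \leq 1$. Since $id_m \otimes \Phi$ respects left and right multiplication by scalar matrices, one has
\begin{align*}
(id_m \otimes \Phi)(X) = (A \otimes I_{\cl{H}}) \cdot \mathrm{diag}\big((id_k \otimes \Phi)(x_1),\ldots,(id_k \otimes \Phi)(x_r)\big) \cdot (B \otimes I_{\cl{H}})^*.
\end{align*}
Each block $(id_k \otimes \Phi)(x_i)$ has operator norm at most $\big\|x_i\big\|_{M_k(V)} \leq 1$, so the central block-diagonal matrix has norm at most $1$ and the whole product has norm at most $\big\|A\big\|\big\|B\big\|$. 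Taking the supremum over $\Phi$ and then the infimum over all valid decompositions yields $\big\|X\big\|_{M_m(MAX^k(V))} \leq \nu(X)$.

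For the lower direction, the plan is to check that $\nu$ defines an operator space structure on $V$. Matrix-norm compatibility $\nu(CXD^*) \leq \big\|C\big\|\nu(X)\big\|D\big\|$ is immediate by concatenation: if $X = A \cdot \mathrm{diag}(x_i) \cdot B^*$ then $CXD^* = (CA) \cdot \mathrm{diag}(x_i) \cdot (DB)^*$. One direction of the $L^\infty$ condition, $\max\{\nu(X),\nu(Y)\} \leq \nu(X \oplus Y)$, then follows by compressing to each block. The reverse $L^\infty$ inequality $\nu(X \oplus Y) \leq \max\{\nu(X),\nu(Y)\}$ will use the block-diagonal combination
\begin{align*}
X \oplus Y = \mathrm{diag}(A_1, A_2) \cdot \mathrm{diag}(x_1,\ldots,x_r,y_1,\ldots,y_s) \cdot \mathrm{diag}(B_1, B_2)^*,
\end{align*}
together with the standard rescaling $A_i \mapsto t_i A_i$, $B_i \mapsto t_i^{-1} B_i$, which preserves $\big\|A_i\big\|\big\|B_i\big\|$ and lets us arrange $\big\|A_1\big\| = \big\|B_1\big\| \approx \sqrt{\nu(X)}$ and $\big\|A_2\big\| = \big\|B_2\big\| \approx \sqrt{\nu(Y)}$, so that both outer block norms reduce to $\max\{\sqrt{\nu(X)},\sqrt{\nu(Y)}\}$, whose product is $\max\{\nu(X),\nu(Y)\}$. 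Subadditivity $\nu(X+Y) \leq \nu(X)+\nu(Y)$ follows analogously via the row/column concatenation $[A_1\ A_2]$ and $[B_1\ B_2]$ with the same rescaling. Finally, agreement $\nu = \big\|\cdot\big\|_{M_m(V)}$ on $M_m(V)$ for $m \leq k$ comes from two observations: the single-block decomposition $X = (\big\|\tilde X\big\|_{M_k(V)}^{1/2} P) \cdot (\tilde X / \big\|\tilde X\big\|_{M_k(V)}) \cdot (\big\|\tilde X\big\|_{M_k(V)}^{1/2} P)^*$, where $\tilde X \in M_k(V)$ is $X$ padded with zeros and $P \in M_{m,k}$ is the coordinate truncation, gives $\nu(X) \leq \big\|X\big\|_{M_m(V)}$ (using that $\big\|\tilde X\big\|_{M_k(V)} = \big\|X\big\|_{M_m(V)}$ by the $L^\infty$ condition of $V$), and the reverse inequality $\big\|X\big\|_{M_m(V)} \leq \nu(X)$ follows from the matrix-norm compatibility of the original structure on $V$ applied to any admissible decomposition, since $\big\|\mathrm{diag}(x_i)\big\|_{M_{rk}(V)} = \max_i \big\|x_i\big\|_{M_k(V)} \leq 1$. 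With all of this in place, the maximality property of $MAX^k(V)$ yields $\nu(X) \leq \big\|X\big\|_{M_m(MAX^k(V))}$.

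The hardest step will be the $L^\infty$ condition $\nu(X \oplus Y) \leq \max\{\nu(X),\nu(Y)\}$: the naive block-diagonal combination only bounds $\nu(X \oplus Y)$ by $\max\{\big\|A_1\big\|,\big\|A_2\big\|\} \cdot \max\{\big\|B_1\big\|,\big\|B_2\big\|\}$, which can exceed $\max\{\big\|A_1\big\|\big\|B_1\big\|,\big\|A_2\big\|\big\|B_2\big\|\}$ when the two factors are imbalanced. The rescaling trick, which balances the factors $\big\|A_i\big\|$ and $\big\|B_i\big\|$ without changing their product, is the technical device that turns this ``max of products'' obstruction into the desired bound; without it, one obtains only a geometric-mean style estimate that is insufficient.
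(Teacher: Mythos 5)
Your proof is correct and follows essentially the same route as the paper's: the ``$\leq$'' direction via the operator space axioms applied to $(id_m\otimes\Phi)$ of a given decomposition, and the ``$\geq$'' direction by verifying that the infimum defines an $L^\infty$-matrix norm structure on $V$ agreeing with the original one for $m\leq k$ and then invoking the maximality property of $MAX^k(V)$. The only difference is that you spell out the $L^\infty$ and subadditivity verifications (in particular the balancing rescaling of $A_i$ and $B_i$), which the paper omits by deferring to \cite[Theorem 14.2]{Paulsentext}.
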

\begin{proof}
	The ``$\leq$'' inequality follows simply from the axioms of an operator space: if $X = (X_{ij}) = A\cdot{\rm diag}(x_1,\ldots,x_r)\cdot B^* \in M_m(V)$ then
	\begin{align*}
		(\Phi(X_{ij})) = A\cdot{\rm diag}((id_k \otimes \Phi)(x_1),\ldots,(id_k \otimes \Phi)(x_r))\cdot B^*.
	\end{align*}
	
	\noindent Thus
	\begin{align*}
		\big\|(\Phi(X_{ij}))\big\| \leq \big\|A\big\| \big\|B\big\| \max\big\{\|(id_k \otimes \Phi)(x_1)\|, \ldots, \|(id_k \otimes \Phi)(x_r)\|\big\}.
	\end{align*}
	
	\noindent By taking the supremum over maps $\Phi$ with $\|id_k \otimes \Phi\| \leq 1$, the ``$\leq$'' inequality follows.
	
	We will now show that the infimum on the right is an $L^{\infty}$ matrix norm that coincides with $\|\cdot\|_{M_m(V)}$ for $1 \leq m \leq k$. The ``$\geq$'' inequality will then follow from the fact that $\|\cdot\|_{M_m(MAX^k(V))}$ is the maximal such norm.
	
	First, denote the infimum on the right by $\big\|X\big\|_{m,inf}$ and fix some $1 \leq m \leq k$. Then the inequality $\big\|X\big\|_{M_m(V)} \leq \big\|X\big\|_{m,inf}$ follows immediately by picking any particular decomposition $X = A\cdot{\rm diag}(x_1,\ldots,x_r)\cdot B^*$ and using the axioms of an operator space to see that
	\begin{eqnarray*}
		\big\| A\cdot{\rm diag}(x_1,\ldots,x_r)\cdot B^* \big\|_{M_m(V)} &\leq & \big\|A\big\| \big\|B\big\| \max\big\{\|x_1\|, \ldots, \|x_r\|\big\} \\ &\leq & \big\|A\big\| \big\|B\big\| \\ &\leq & \big\|X\big\|_{m,inf}.
	\end{eqnarray*}
	
	\noindent The fact that equality is attained by some decomposition of $X$ comes simply from writing $X = \big(\big\| X \big\|_{M_m(V)}I\big) \cdot (X \oplus 0_{k-m}) \cdot I$. It follows that $\|\cdot\|_{M_m(V)} = \|\cdot\|_{m,inf}$ for $1 \leq m \leq k$.
	
	All that remains to be proved is that $\|\cdot\|_{m,inf}$ is an $L^\infty$ matrix norm, which we omit as it is directly analogous to the proof of \cite[Theorem 14.2]{Paulsentext}.
\end{proof}

As one final note, observe that we can obtain lower bounds of the $k$-minimal and $k$-maximal operator space norms simply by choosing particular maps $\Phi$ that satisfy the normalization condition of their definition. Upper bounds of the $k$-maximal norms can be obtained from Theorem~\ref{thm:kMaxNormChar}. The problem of computing upper bounds for the $k$-minimal norms was investigated in \cite{JK10b}.

\section{$k$-Super Minimal and $k$-Super Maximal Operator Systems}\label{sec:kMinOpSys}

We will now introduce (abstract) operator systems, and in particular the minimal and maximal operator systems that were explored in \cite{PTT09} and the $k$-super minimal and $k$-super maximal operator systems that were explored in \cite{XThesis}. Our introduction to general operator systems will be brief, and the interested reader is directed to \cite[Chapter 13]{Paulsentext} for a more thorough treatment.

Let $V$ be a complex (not necessarily normed) vector space as before, with a conjugate linear involution that will be denoted by ${}^*$ (such a space is called a \emph{$*$-vector space}). Define $V_h := \{ v \in V : v = v^* \}$ to be the set of Hermitian elements of $V$. We will say that $(V,V^+)$ is an \emph{ordered $*$-vector space} if $V^+ \subseteq V_h$ is a convex cone satisfying $V^+ \cap -V^+ = \{ 0 \}$. Here $V^+$ plays the role of the ``positive'' elements of $V$ -- in the most familiar ordering on square matrices, $V^+$ is the set of positive semidefinite matrices.

Much as was the case with operator spaces, an operator system is constructed by considering the spaces $M_m(V)$, but instead of considering various norms on these spaces that behave well with the norm on $V$, we will consider various cones of positive elements on $M_m(V)$ that behave well with the cone of positive elements $V_+$. To this end, given a $*$-vector space $V$ we let $M_m(V)_h$ denote the set of Hermitian elements in $M_m(V)$. It is said that a family of cones $C_m \subseteq M_m(V)_h$ ($m \geq 1$) is a \emph{matrix ordering} on $V$ if $C_1 = V^+$ and they satisfy the following three properties:
\begin{itemize}
	\item each $C_m$ is a cone in $M_m(V)_h$;
	\item $C_m \cap -C_m = \{0\}$ for each $m$; and
	\item for each $n,m \in \bb{N}$ and $X \in M_{m,n}$ we have $X^* C_m X \subseteq C_n$.
\end{itemize}

A final technical restriction on $V$ is that we will require an element $e \in V_h$ such that, for any $v \in V$, there exists $r > 0$ such that $re - v \in V^+$ (such an element $e$ is called an \emph{order unit}). It is said that $e$ is an \emph{Archimedean order unit} if $re + v \in V^+$ for all $r > 0$ implies that $v \in V^+$. A triple $(V,C_1,e)$, where $(V,C_1)$ is an ordered $*$-vector space and $e$ is an Archimedean order unit, will be referred to as an \emph{Archimedean ordered $*$-vector space} or an \emph{AOU space} for short. Furthermore, if $e \in V_h$ is an Archimedean order unit then we say that it is an \emph{Archimedean matrix order unit} if the operator $e_m := I_m \otimes e \in M_m(V)$ is an Archimedean order unit in $C_m$ for all $m \geq 1$. We are now able to define abstract operator systems:
\begin{defn}
	An \emph{(abstract) operator system} is a triple $(V,\{C_m\}_{m=1}^\infty,e)$, where $V$ is a $*$-vector space, $\{C_m\}_{m=1}^\infty$ is a matrix ordering on $V$, and $e \in V_h$ is an Archimedean matrix order unit.
\end{defn}

For brevity, we may simply say that $V$ is an operator system, with the understanding that there is an associated matrix ordering $\{C_m\}_{m=1}^\infty$ and Archimedean matrix order unit $e$. Recall from \cite{PTT09} that for any AOU space $(V,V^+,e)$ there exists minimal and maximal operator system structure $OMIN(V)$ and $OMAX(V)$ -- that is, there exist particular families of cones $\{C_m^{min}\}_{m=1}^\infty$ and $\{C_m^{max}\}_{m=1}^\infty$ such that if $\{D_m\}_{m=1}^\infty$ is any other matrix ordering on $(V,V^+,e)$ then $C_m^{max} \subseteq D_m \subseteq C_m^{min}$ for all $m \geq 1$. In \cite{XThesis} a generalization of these operator system structures, analogous to the $k$-minimal and $k$-maximal operator spaces presented in Section~\ref{sec:op_space}, was introduced. Given an operator system $V$, the \emph{$k$-super minimal operator system} of $V$ and the \emph{$k$-super maximal operator system} of $V$, denoted $OMIN^k(V)$ and $OMAX^k(V)$ respectively, are defined via the following families of cones:
\begin{align*}
	C_m^{min,k} & := \big\{ (X_{ij}) \in M_m(V) : (\Phi(X_{ij})) \in M_m^{+} \ \ \forall \, \text{unital CP maps } \Phi : V \rightarrow M_k \big\},	\\
	C_m^{max,k} & := \big\{ A\cdot D\cdot A^* \in M_m(V) : A \in M_{m,rk}, D = {\rm diag}(D_1, \ldots, D_r), \\
	 & \quad \quad \quad \quad \quad \quad \quad \quad \quad \quad \quad \ \ D_\ell \in M_k(V)^+ \ \forall \, \ell, r \in \bb{N} \big\}.
\end{align*}

\noindent If $V$ is infinite-dimensional then the cones $C_m^{max,k}$ need not define an operator system due to $I_m \otimes e$ perhaps not always being an Archimedean order unit, though it was shown in \cite{XThesis} how to Archimedeanize the space to correct this problem. We will avoid this technicality by working explicitly in the $V = M_n$ case from now on.

Observe that the interpretation of the $k$-super minimal and $k$-super maximal operator systems is completely analogous to the interpretation of $k$-minimal and $k$-maximal operator spaces. The families of positive cones $C_m^{min,k}$ and $C_m^{max,k}$ coincide with the families of positive cones of $V$ for $1 \leq m \leq k$, and out of all operator system structures on $V$ with this property they are the largest (smallest, respectively) for $m > k$.

In terms of quantum information theory, the cones $C_m^{min,k} \subseteq M_m \otimes M_n$ are exactly the cones of $k$-block positive operators, and the cones $C_m^{max,k} \subseteq M_m \otimes M_n$ are exactly the cones of (unnormalized) density operators $\rho$ with $SN(\rho) \leq k$. These facts have appeared implicitly in the past, but their importance merits making the details explicit:
\begin{thm}\label{thm:opSysConeChar}
	Let $X,\rho \in M_m \otimes M_n$. Then
	\begin{enumerate}[(a)]
		\item $X \in C_m^{min,k}$ if and only if $X$ is $k$-block positive; and
		\item $\rho \in C_m^{max,k}$ if and only if $SN(\rho) \leq k$.
	\end{enumerate}
\end{thm}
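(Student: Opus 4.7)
I would prove (a) and (b) separately. Both arguments hinge on the elementary observation that a vector $\ket{v}\in\bb{C}^m\otimes\bb{C}^n$ satisfies $SR(\ket{v})\leq k$ if and only if $\ket{v}=(B\otimes I_n)\ket{x}$ for some $B\in M_{m,k}$ (which may be taken to be an isometry) and some $\ket{x}\in\bb{C}^k\otimes\bb{C}^n$. This is just a repackaging of the Schmidt decomposition, and it serves as the bridge between the Kraus/block-decomposition side of the cones $C_m^{min,k}, C_m^{max,k}$ and the ``bounded-Schmidt-rank vector'' side.

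For part (a), the forward direction is a one-line calculation. A unital CP map $\Phi:M_n\to M_k$ has Kraus form $\Phi(Y)=\sum_i A_iYA_i^*$ with $A_i\in M_{k,n}$ and $\sum_i A_iA_i^*=I_k$, so for any $\ket{w}\in\bb{C}^m\otimes\bb{C}^k$,
\[\bra{w}(id_m\otimes\Phi)(X)\ket{w}=\sum_i\bra{w_i}X\ket{w_i},\qquad \ket{w_i}:=(I_m\otimes A_i^*)\ket{w}.\]
Writing $\ket{w}=\sum_{j=1}^k\ket{c_j}\otimes\ket{j}$ shows that each $\ket{w_i}$ is a sum of $k$ product vectors and hence has $SR\leq k$; $k$-block positivity of $X$ then forces the right-hand side to be nonnegative. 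For the converse I would assume $X$ is not $k$-block positive, pick $\ket{v}$ with $SR(\ket{v})\leq k$ and $\bra{v}X\ket{v}<0$, take its Schmidt decomposition $\ket{v}=\sum_{j=1}^k\alpha_j\ket{c_j}\otimes\ket{a_j}$, and form the isometry $V:\bb{C}^k\to\bb{C}^n$ with $V\ket{j}=\ket{a_j}$. The map $\Phi(Y):=V^*YV$ is a UCP map into $M_k$ (unital because $V^*V=I_k$), and pairing $(id_m\otimes\Phi)(X)=(I_m\otimes V^*)X(I_m\otimes V)$ with $\ket{w}:=\sum_j\alpha_j\ket{c_j}\otimes\ket{j}$ reproduces $\bra{v}X\ket{v}<0$, so $X\notin C_m^{min,k}$.

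For part (b), suppose first that $\rho=A\cdot{\rm diag}(D_1,\ldots,D_r)\cdot A^*$ with $A\in M_{m,rk}$ and each $D_\ell\in M_k(M_n)^+$. Writing $A=[A_1\,|\,\cdots\,|\,A_r]$ with $A_\ell\in M_{m,k}$ gives $\rho=\sum_\ell(A_\ell\otimes I_n)D_\ell(A_\ell\otimes I_n)^*$, and spectrally decomposing $D_\ell=\sum_j\ketbra{x_j^\ell}{x_j^\ell}$ with $\ket{x_j^\ell}\in\bb{C}^k\otimes\bb{C}^n$ yields $\rho=\sum_{\ell,j}\ketbra{y_j^\ell}{y_j^\ell}$ with $\ket{y_j^\ell}:=(A_\ell\otimes I_n)\ket{x_j^\ell}$. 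Since $\ket{x_j^\ell}$ already has Schmidt rank at most $k$ and a local operator on the first factor cannot increase Schmidt rank, each $\ket{y_j^\ell}$ has $SR\leq k$, so $SN(\rho)\leq k$. Conversely, starting from $\rho=\sum_j p_j\ketbra{v_j}{v_j}$ with $SR(\ket{v_j})\leq k$, the Schmidt decomposition of each $\ket{v_j}$ gives a factorisation $\ket{v_j}=(B_j\otimes I_n)\ket{x_j}$ with $B_j\in M_{m,k}$ an isometry and $\ket{x_j}\in\bb{C}^k\otimes\bb{C}^n$; setting $A_j:=\sqrt{p_j}B_j$ and $D_j:=\ketbra{x_j}{x_j}\in M_k(M_n)^+$ and assembling them into $A=[A_1\,|\,\cdots\,|\,A_r]$ and $D={\rm diag}(D_1,\ldots,D_r)$ yields $\rho=A\cdot D\cdot A^*\in C_m^{max,k}$.

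The main obstacle is essentially bookkeeping: keeping the block product $A\cdot{\rm diag}(D_1,\ldots,D_r)\cdot A^*$ straight under the convention that $A$ acts only on the $M_m$ factor while each $D_\ell$ lives in $M_k\otimes M_n$, and remembering in (a) that the definition of $C_m^{min,k}$ demands \emph{unital} CP maps, so the witness $\Phi(Y)=V^*YV$ must be built from a genuine isometry rather than a general rank-$k$ contraction. Beyond that, both directions reduce to the Schmidt decomposition together with the Kraus/spectral representation of positive maps and positive operators, so I expect the total length to be modest.
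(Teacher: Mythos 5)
Your proposal is correct and follows essentially the same route as the paper: Choi--Kraus representations plus the observation that $(I_m\otimes A_i^*)$ maps vectors of the form $\sum_j\ket{c_j}\otimes\ket{j}$ to vectors of Schmidt rank at most $k$ for part (a), and the block-diagonal/rank-one decomposition of $A\cdot{\rm diag}(D_1,\ldots,D_r)\cdot A^*$ for part (b). The only differences are presentational --- you organize (b) via the block columns $A_\ell$ of $A$ rather than the paper's explicit index computation, and you write out the ``if'' direction of (b) that the paper dismisses as ``easily reversed'' --- so no further changes are needed.
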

\begin{proof}
	To see (a), we will use techniques similar to those used in the proof of Theorem~\ref{thm:MINkChar}. Use the Choi-Kraus representation of completely positive maps so that $X \in C_m^{min,k}$ if and only if
	\begin{align*}
		\sum_{i=1}^{nk} (I_m \otimes A_i)X(I_m \otimes A_i^*) \in (M_m \otimes M_k)^+ \text{ for all } \big\{A_i\big\} \subset M_{k,n} \text{ with } \sum_{i=1}^{nk}A_i A_i^* = I_k.
	\end{align*}

	Now define $\alpha_{ij}\ket{a_{ij}} := A_i^*\ket{j}$ and let $\ket{v} = \sum_{j=0}^{k-1} \gamma_j \ket{c_j} \otimes \ket{j} \in \bb{C}^m \otimes \bb{C}^k$ be an arbitrary unit vector. Then some algebra reveals
	\begin{align*}
		\nu_i\ket{v_i} & := (I_m \otimes A_i^*)\ket{v} = \sum_{j=0}^{k-1} \alpha_{ij} \gamma_j \ket{c_j} \otimes \ket{a_{ij}}.
	\end{align*}
	
	\noindent In particular, $SR(\ket{v_i}) \leq k$ for all $i$. Thus we can write
	\begin{align}\label{eq:cbschmidtSys}
		\sum_{i=1}^{nk}\bra{v}(I_m \otimes A_i)(X)(I_m \otimes A_i^*)\ket{v} = \sum_{i=1}^{nk} \nu_i^2 \bra{v_i} X \ket{v_i} \geq 0.
	\end{align}
	
	\noindent Part (a) follows by noting that we can choose $\ket{v}$ and a CP map with one Kraus operator $A_1$ so that $(I_m \otimes A_1^*)\ket{v}$ is any particular vector of our choosing with Schmidt rank no larger than $k$.
	
	To see the ``only if'' implication of (b), we could invoke various known duality results from operator theory and quantum information theory so that the result would follow from (a), but for completeness we will instead prove it using elementary means. To this end, suppose $\rho \in C_m^{max,k}$. Thus we can write $\rho = A\cdot D\cdot A^*$ for some $A \in M_{m,rk}$ and $D = {\rm diag}(D_1,\ldots,D_r) = \sum_{\ell=1}^r \ketbra{\ell}{\ell} \otimes D_\ell$ with $D_\ell \in M_k(M_n)^{+}$ for all $\ell$. Furthermore, write $D_\ell = \sum_h d_{\ell,h}\ketbra{v_{\ell,h}}{v_{\ell,h}}$ where $\ket{v_{\ell,h}} = \sum_{i=1}^k \ket{i} \otimes \ket{d_{\ell,h,i}}$. Then if we define $\alpha_{\ell,i}\ket{a_\ell,i} := A(\ket{\ell} \otimes \ket{i})$, we have
	\begin{align*}
		A\cdot D\cdot A^* & = \sum_{h=1}^{kn}\sum_{\ell=1}^r d_{\ell,h} \sum_{ij=1}^k A(\ketbra{\ell}{\ell} \otimes \ketbra{i}{j}) A^* \otimes \ketbra{d_{\ell,h,i}}{d_{\ell,h,j}} \\
		& = \sum_{h=1}^{kn}\sum_{\ell=1}^r d_{\ell,h} \sum_{ij=1}^k \alpha_{\ell,i}\alpha_{\ell,j}\ketbra{a_{\ell,i}}{a_{\ell,j}} \otimes \ketbra{d_{\ell,h,i}}{d_{\ell,h,j}} \\
		& = \sum_{h=1}^{kn}\sum_{\ell=1}^r d_{\ell,h} \ketbra{w_{\ell,h}}{w_{\ell,h}},
	\end{align*}
	
	\noindent where
	\begin{align*}
		\ket{w_{\ell,h}} := \sum_{i=1}^k \alpha_{\ell,i}\ket{a_{\ell,i}} \otimes \ket{d_{\ell,h,i}}.
	\end{align*}
	
	\noindent Since $SR(\ket{w_{\ell,h}}) \leq k$ for all $\ell,h$, it follows that $SN(\rho) \leq k$ as well.
	
	For the ``if'' implication, we  note that the above argument can easily be reversed.
\end{proof}

One of the useful consequences of Theorem~\ref{thm:opSysConeChar} is that we can now easily characterize completely positive maps between these various operator system structures. Recall that a map $\Phi$ between operator systems $(V,\{C_m\}_{m=1}^\infty,e)$ and $(V,\{D_m\}_{m=1}^\infty,e)$ is said to be completely positive if $(\Phi(X_{ij})) \in D_m$ whenever $(X_{ij}) \in C_m$. We then have the following result that characterizes $k$-positive maps, entanglement-breaking maps, and $k$-partially entanglement breaking maps as completely positive maps between these $k$-super minimal and $k$-super maximal operator systems.
\begin{cor}\label{cor:kEntangleBreak}
	Let $\Phi : M_n \rightarrow M_n$ and let $k \leq n$. Then
	\begin{enumerate}[(a)]
		\item $\Phi : OMIN^k(M_n) \rightarrow M_n$ is completely positive if and only if $\Phi$ is $k$-partially entanglement breaking;
		\item $\Phi : M_n \rightarrow OMAX^k(M_n)$ is completely positive if and only if $\Phi$ is $k$-partially entanglement breaking;
		\item $\Phi : OMAX^k(M_n) \rightarrow M_n$ is completely positive if and only if $\Phi$ is $k$-positive;
		\item $\Phi : M_n \rightarrow OMIN^k(M_n)$ is completely positive if and only if $\Phi$ is $k$-positive.
	\end{enumerate}
\end{cor}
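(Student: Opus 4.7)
\medskip
\noindent\textbf{Proof plan.} The plan is to derive all four parts by combining Theorem~\ref{thm:opSysConeChar}'s identification of $C_m^{min,k}$ with the $k$-block positive cone and $C_m^{max,k}$ with the cone of Schmidt number $\leq k$ operators, together with two additional ingredients: (i) every vector $\ket{v}\in\bb{C}^m\otimes\bb{C}^n$ with $SR(\ket{v})\leq k$ admits a factorization $\ket{v}=(P\otimes I_n)\ket{w}$ with $P\in M_{m,k}$ and $\ket{w}\in\bb{C}^k\otimes\bb{C}^n$, and dually every element of $C_m^{max,k}$ is a sum of congruences $(P\otimes I_n)\sigma(P^*\otimes I_n)$ with $\sigma\in(M_k\otimes M_n)^+$; and (ii) the Kraus characterization of $k$-partially entanglement breaking maps from \cite{CK06}: $\Phi$ is $k$-PEB if and only if $\Phi(Y)=\sum_i A_iYA_i^*$ with $\textrm{rank}(A_i)\leq k$ for all $i$.

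\medskip
\noindent Parts (c) and (d) follow from (i) by reducing to the ``$m=k$ block.'' For (d), if $\Phi$ is $k$-positive and $X\in(M_m\otimes M_n)^+$, then for any $\ket{v}=(P\otimes I_n)\ket{w}$ with $SR(\ket{v})\leq k$ a short computation yields
\begin{align*}
\bra{v}(id_m\otimes\Phi)(X)\ket{v} = \bra{w}(id_k\otimes\Phi)\big((P^*\otimes I_n)X(P\otimes I_n)\big)\ket{w}\geq 0,
\end{align*}
since the argument of $id_k\otimes\Phi$ is PSD; by Theorem~\ref{thm:opSysConeChar}(a) this says $(id_m\otimes\Phi)(X)\in C_m^{min,k}$. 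The converse is immediate on specializing to $m=k$, where $C_k^{min,k}=(M_k\otimes M_n)^+$ forces $id_k\otimes\Phi$ to be positive. Part (c) is symmetric: writing $\rho\in C_m^{max,k}$ as $\sum_j(P_j\otimes I_n)\sigma_j(P_j^*\otimes I_n)$ with $\sigma_j\in(M_k\otimes M_n)^+$ and applying $id_m\otimes\Phi$ termwise reduces the problem to whether $id_k\otimes\Phi$ preserves positivity, which is exactly $k$-positivity of $\Phi$; the converse again uses $C_k^{max,k}=(M_k\otimes M_n)^+$.

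\medskip
\noindent For (b), the forward direction uses the rank-$\leq k$ Kraus form of a $k$-PEB map $\Phi(Y)=\sum_i A_iYA_i^*$: for $X=\sum_j\lambda_j\ketbra{y_j}{y_j}\in(M_m\otimes M_n)^+$ one has
\begin{align*}
(id_m\otimes\Phi)(X) = \sum_{i,j}\lambda_j\ketbra{z_{i,j}}{z_{i,j}},\qquad \ket{z_{i,j}}:=(I_m\otimes A_i)\ket{y_j},
\end{align*}
and since $\ket{z_{i,j}}$ lies in $\bb{C}^m\otimes\textrm{range}(A_i)$ its Schmidt rank is $\leq k$, so the whole sum lies in $C_m^{max,k}$ by Theorem~\ref{thm:opSysConeChar}(b). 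For the converse, applying the CP condition in the special case $m=n$ to the unnormalized maximally entangled projection produces precisely the Choi matrix $C_\Phi$, which therefore has Schmidt number $\leq k$, meaning $\Phi$ is $k$-PEB.

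\medskip
\noindent Finally, (a) is obtained from (b) by Hilbert--Schmidt duality. It is a standard fact in entanglement theory that the cone $C_m^{min,k}$ of $k$-block positive operators and the cone $C_m^{max,k}$ of Schmidt number $\leq k$ operators are dual under the trace pairing; hence $\Phi:OMIN^k(M_n)\to M_n$ is CP iff its Hilbert--Schmidt adjoint $\Phi^*:M_n\to OMAX^k(M_n)$ is CP, which by (b) is equivalent to $\Phi^*$ being $k$-PEB. Because the Kraus operators of $\Phi^*$ are the adjoints of those of $\Phi$ and adjunction preserves rank, $\Phi^*$ is $k$-PEB iff $\Phi$ is. The main obstacle I anticipate is the appeal to the Kraus characterization of $k$-PEB maps in (b); a fully self-contained proof would have to extract rank-$\leq k$ Kraus operators directly from the Schmidt-rank-$\leq k$ decomposition of the Choi matrix that Theorem~\ref{thm:opSysConeChar}(b) provides.
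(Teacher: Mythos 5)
Your proposal is correct, but it routes the argument rather differently from the paper. The paper's proof is essentially a list of citations layered on top of Theorem~\ref{thm:opSysConeChar}: part (a) is quoted from \cite[Theorem 2]{CK06}, part (b) is immediate because the paper takes $SN((id_m\otimes\Phi)(\rho))\leq k$ for all $m$ as the \emph{definition} of $k$-partially entanglement breaking, part (c) is quoted from \cite[Theorem 1]{TH00}, and part (d) is deduced from (c) via the cone duality of \cite{SSZ09}. You instead prove (c) and (d) directly: the factorization $\ket{v}=(P\otimes I_n)\ket{w}$ of a Schmidt-rank-$\leq k$ vector, the identity $(P^*\otimes I_n)(id_m\otimes\Phi)(X)(P\otimes I_n)=(id_k\otimes\Phi)((P^*\otimes I_n)X(P\otimes I_n))$, and the decomposition of $C_m^{max,k}$ into congruences of elements of $(M_k\otimes M_n)^+$ together constitute an elementary proof of the Terhal--Horodecki/Ranade--Ali correspondence that the paper simply cites; this is a genuine gain in self-containedness and is very much in the spirit of the paper's own Theorems~\ref{thm:MINkChar} and \ref{thm:opSysConeChar}. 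Your duality step is applied to the pair (a)$\Leftrightarrow$(b) rather than the paper's (c)$\Rightarrow$(d), but it is the same mechanism (self-duality of the PSD cone plus duality of $C_m^{min,k}$ and $C_m^{max,k}$), and your observation that $\Phi^*$ is $k$-PEB iff $\Phi$ is closes it correctly. The one place you do more work than necessary is (b): under the paper's definition of $k$-PEB, part (b) follows in one line from Theorem~\ref{thm:opSysConeChar}(b), whereas your argument detours through the rank-$\leq k$ Kraus characterization of \cite{CK06} (forward direction) and the Choi--Jamio{\l}kowski correspondence (converse). That detour is sound --- $(I_m\otimes A_i)\ket{y_j}$ does have Schmidt rank at most $\mathrm{rank}(A_i)$ --- but, as you anticipate, it imports the CK06 equivalence as an external input rather than avoiding it; with the paper's definition you could drop it entirely and keep everything else as is.
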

\begin{proof}
	Fact (a) follows from \cite[Theorem 2]{CK06} and fact (b) follows from the fact that $\Phi$ is $k$-partially entanglement breaking (by definition) if and only if $SN((id_m \otimes \Phi)(\rho)) \leq k$ for all $m \geq 1$. Fact (c) follows from \cite[Theorem 1]{TH00} and (d) follows from (c) and the fact that the cone of unnormalized states with Schmidt number at most $k$ and the cone of $k$-block positive operators are dual to each other \cite{SSZ09}.
\end{proof}

\begin{remark}
Corollary~\ref{cor:kEntangleBreak} was originally proved in the $k = 1$ case in \cite{PTT09} and for arbitrary $k$ in \cite{XThesis}. Both of those proofs prove the result directly, without characterizing the cones $C_m^{min,k}$ and $C_m^{max,k}$ as in Theorem~\ref{thm:opSysConeChar}.
\end{remark}

\section{Norms on Operator Systems}\label{sec:OpSysNorms}

Given an operator system $V$, the \emph{matrix norm induced by the matrix order} $\big\{C_m\big\}_{m=1}^\infty$ is defined for $X \in M_m(V)$ to be
\begin{align}\label{eq:matrixNorm}
	\big\|X\big\|_{M_m(V)} := \inf\left\{ r : \begin{pmatrix}r e_m & X \\ X^* & r e_m \end{pmatrix} \in C_{2m} \right\}.
\end{align}

\noindent In the particular case of $X \in M_m(OMIN^k(V))$ or $X \in M_m(OMAX^k(V))$, we will denote the norm~\eqref{eq:matrixNorm} by $\big\|X\big\|_{M_m(OMIN^k(V))}$ and $\big\|X\big\|_{M_m(OMAX^k(V))}$, respectively. Our first result characterizes $\big\|X\big\|_{M_m(OMIN^k(M_n))}$ in terms of the Schmidt rank of pure states, much like Theorem~\ref{thm:MINkChar} characterized $\big\|X\big\|_{M_m(MIN^k(M_n))}$.
\begin{thm}\label{thm:k_order_norm}
	Let $X \in M_m(OMIN^k(M_n))$. Then
	\begin{align*}
		\big\|X\big\|_{M_m(OMIN^k(M_n))} = \sup_{\ket{v},\ket{w}}\big\{ | \bra{v} X \ket{w} | : & \ SR(\ket{v}),SR(\ket{w}) \leq k \text{ and} \\
			& \ \exists \, P \in M_m \text{ s.t. } (P \otimes I_n)\ket{v} = \ket{w} \big\}.
	\end{align*}
\end{thm}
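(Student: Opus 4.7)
The plan is to reduce the induced matrix norm on $OMIN^k(M_n)$ to a supremum of operator norms of the amplifications $(id_m \otimes \Phi)(X)$ over unital completely positive maps $\Phi : M_n \to M_k$, and then to unpack this using the Choi-Kraus representation -- in close analogy with the proof of Theorem~\ref{thm:MINkChar}. Unfolding~\eqref{eq:matrixNorm} and applying $\Phi$ entrywise to the $2 \times 2$ block operator $\begin{pmatrix} rI & X \\ X^* & rI \end{pmatrix}$ (using $\Phi(I_n) = I_k$), one identifies membership in $C_{2m}^{min,k}$ with the inequality $\|(id_m \otimes \Phi)(X)\| \le r$ for every such $\Phi$, and hence
\[
\|X\|_{M_m(OMIN^k(M_n))} = \sup_{\Phi \text{ unital CP}} \|(id_m \otimes \Phi)(X)\|.
\]

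For the upper bound, fix a unital CP $\Phi$ with Choi-Kraus decomposition $\Phi(Y) = \sum_i A_i Y A_i^*$ satisfying $\sum_i A_i A_i^* = I_k$, and unit test vectors $\ket{v}, \ket{w} \in \bb{C}^m \otimes \bb{C}^k$ expanded as $\ket{v} = \sum_\ell \ket{c_\ell} \ket{\ell}$ and $\ket{w} = \sum_\ell \ket{d_\ell}\ket{\ell}$. Define $\ket{v_i'} := (I_m \otimes A_i^*)\ket{v}$ and $\ket{w_i'} := (I_m \otimes A_i^*)\ket{w}$ in $\bb{C}^m \otimes \bb{C}^n$; each is a sum of $k$ simple tensors and hence has Schmidt rank at most $k$, and the operator $P \in M_m$ determined by $P\ket{c_\ell} = \ket{d_\ell}$ (well-defined on a dense set of $\ket{v}$ where the $\ket{c_\ell}$ are linearly independent) satisfies $(P \otimes I_n)\ket{v_i'} = \ket{w_i'}$ \emph{simultaneously for every $i$}. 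Each pair $(\ket{v_i'}, \ket{w_i'})$ therefore lies in the admissible set for the right-hand-side supremum, and the expansion $\bra{v}(id_m \otimes \Phi)(X)\ket{w} = \sum_i \bra{v_i'}X\ket{w_i'}$ combined with the pointwise bound $|\bra{v_i'}X\ket{w_i'}| \le \text{RHS} \cdot \|v_i'\| \|w_i'\|$, Cauchy-Schwarz, and the normalization $\sum_i \|v_i'\|^2 = \bra{v}(I_m \otimes \sum_i A_i A_i^*)\ket{v} = 1$ (with its analogue for $\ket{w}$) then bounds $|\bra{v}(id_m \otimes \Phi)(X)\ket{w}|$ by the RHS; a continuity argument extends this to arbitrary $\ket{v}$.

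For the lower bound, I would exhibit a single-Kraus unital CP map realizing any admissible pair. Given $\ket{v} = \sum_i \alpha_i \ket{c_i}\ket{a_i}$ (Schmidt decomposition) with $\ket{w} = (P \otimes I_n)\ket{v} = \sum_i \alpha_i (P\ket{c_i})\ket{a_i}$, extend $\{\ket{a_i}\}$ to $k$ orthonormal vectors of $\bb{C}^n$ and set $A := \sum_{i=1}^k \ket{i}\bra{a_i} \in M_{k,n}$. Then $AA^* = I_k$, so $\Phi(Y) := AYA^*$ is unital CP; the lifts $\ket{\tilde v} := \sum_i \alpha_i \ket{c_i}\ket{i}$ and $\ket{\tilde w} := \sum_i \alpha_i (P\ket{c_i})\ket{i}$ are unit vectors in $\bb{C}^m \otimes \bb{C}^k$ for which a direct computation yields $\bra{\tilde v}(id_m \otimes \Phi)(X)\ket{\tilde w} = \bra{v}X\ket{w}$, giving $\|(id_m \otimes \Phi)(X)\| \ge |\bra{v}X\ket{w}|$. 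The main technical obstacle I anticipate is the linear-independence/density argument underpinning the common $P$ in the upper-bound step: the ``$\exists P$'' condition has to hold uniformly across $i$, which is immediate whenever $\{\ket{c_\ell}\}$ is linearly independent but requires careful handling via continuity in the boundary cases where the relative sizes of $m$, $n$, and $k$ preclude full independence.
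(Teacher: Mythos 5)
Your argument is correct, but it takes a genuinely different route from the paper's. The paper never passes through completely positive maps in this proof: it invokes the identification of $C_{2m}^{min,k}$ with the $k$-block positive cone (Theorem~\ref{thm:opSysConeChar}(a)), tests the $2\times 2$ block operator $\tilde X$ directly against a Schmidt-rank-$k$ vector $\ket{v}\in\bb{C}^{2}\otimes\bb{C}^m\otimes\bb{C}^n$, and reads off $\bra{v}\tilde X\ket{v}=r+2c_1c_2\,{\rm Re}\bra{v_1}X\ket{v_2}$, where the constraint ``$\exists P$'' emerges because $\ket{v_1}$ and $\ket{v_2}$ are the two halves of a single vector and hence share their $\bb{C}^n$-components. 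You instead first establish the intermediate identity $\|X\|_{M_m(OMIN^k(M_n))}=\sup_\Phi\|(id_m\otimes\Phi)(X)\|$ over unital CP maps $\Phi:M_n\to M_k$ and then rerun the Kraus-operator argument of Theorem~\ref{thm:MINkChar}; the key new ingredient is the intertwining $(P\otimes I_n)(I_m\otimes A_i^*)=(I_m\otimes A_i^*)(P\otimes I_k)$, which is exactly what forces a single $P$ to work for all Kraus operators simultaneously and hence produces the ``$\exists P$'' constraint. Your route is longer but makes the structural parallel with the operator-space case explicit and isolates a reusable characterization of the $OMIN^k$ norm as a supremum over unital CP compressions (essentially the content of the Remark following Theorem~\ref{thm:MINkChar}); the paper's route is shorter and needs no Kraus decompositions once Theorem~\ref{thm:opSysConeChar} is in hand. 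The one caveat you flag --- that the operator $P$ with $P\ket{c_\ell}=\ket{d_\ell}$ need not exist when the $\ket{c_\ell}$ are linearly dependent, so a genericity/continuity argument is needed --- is real, but it is present in exactly the same form in the paper's proof (the claimed equivalence between ``expressible with the same $\ket{b_i}$'' and ``$\exists P$'' also fails in degenerate configurations, e.g.\ when $m<k$), so you are not worse off; in the regime $k\le m$ where the statement is intended to apply, a perturbation of $\ket{v}$ to make the $\ket{c_\ell}$ independent settles it, as you indicate.
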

\begin{proof}
	Given $X \in M_m(OMIN^k(M_n))$, consider the operator
	\begin{align*}
		\tilde{X} := \begin{pmatrix}r I_n & X \\ X^* & r I_n \end{pmatrix} \in (M_2 \otimes M_m) \otimes M_n \cong M_{2m} \otimes M_n.
	\end{align*}
	
	\noindent Then $\tilde{X} \in C^{min,k}_{2m}(M_n)$ if and only if $\bra{v}\tilde{X}\ket{v} \geq 0$ for all $\ket{v} \in \bb{C}^{2m} \otimes \bb{C}^n$ with $SR(\ket{v}) \leq k$. If we multiply on the left and the right by a Schmidt-rank $k$ vector $\ket{v} := \sum_{i=1}^k \beta_i \ket{a_i} \otimes \ket{b_i}$, where $\ket{a_i} = \alpha_{i1}\ket{1} \otimes \ket{a_{i1}} + \alpha_{i2}\ket{2} \otimes \ket{a_{i2}} \in \bb{C}^2 \otimes \bb{C}^m$ and $\ket{b_i} \in \bb{C}^n$, we get
	\begin{align*}
		\bra{v}\tilde{X}\ket{v} & = \sum_{i=1}^k r\big(\beta_i^2\alpha_{i1}^2 + \beta_i^2\alpha_{i2}^2\big) + \sum_{ij=1}^k 2 \alpha_{i1}\alpha_{j2}\beta_i \beta_j{\rm Re}\big((\bra{a_{i1}} \otimes \bra{b_i})X(\ket{a_{j2}} \otimes \ket{b_j})\big)	\\
		& = r + \sum_{ij=1}^k 2\alpha_{i1}\alpha_{j2}\beta_i \beta_j{\rm Re}\big((\bra{a_{i1}} \otimes \bra{b_i})X(\ket{a_{j2}} \otimes \ket{b_j})\big) \\
		& = r + 2 c_1 c_2 {\rm Re}\big(\bra{v_1}X\ket{v_2}\big),
	\end{align*}
	
	\noindent where $c_1\ket{v_1} := \sum_{i=1}^k \alpha_{i1} \beta_i\ket{a_{i1}} \otimes \ket{b_i},c_2\ket{v_2} := \sum_{i=1}^k \alpha_{i2} \beta_i\ket{a_{i2}} \otimes \ket{b_i} \in \bb{C}^m \otimes \bb{C}^n$. Notice that the normalization of the Schmidt coefficients tells us that $c_1^2 + c_2^2 = 1$. Also notice that $\ket{v_1}$ and $\ket{v_2}$ can be written in this way using the same vectors $\ket{b_i}$ on the second subsystem if and only if there exists $P \in M_m$ such that $(P \otimes I_n)\ket{v_1} = \ket{v_2}$. Now taking the infimum over $r$ and requiring that the result be non-negative tells us that the quantity we are interested in is
	\begin{align*}
		\big\|X\big\|_{M_m(OMIN^k(M_n))} & = \sup\Big\{2c_1 c_2{\rm Re}\big(\bra{v_1}X\ket{v_2}\big) : SR(\ket{v_1}),SR(\ket{v_2}) \leq k, c_1^2 + c_2^2 = 1, \\
		& \quad \quad \quad \quad \quad \quad \quad \quad \quad \quad \quad \quad \quad \quad \exists \, P \in M_m \text{ s.t. } (P \otimes I_n)\ket{v_1} = \ket{v_2} \Big\} \\
		& = \sup\Big\{\big|\bra{v_1}X\ket{v_2}\big| : SR(\ket{v_1}),SR(\ket{v_2}) \leq k \text{ and} \\
		& \quad \quad \quad \quad \quad \quad \quad \quad \quad \quad \quad \quad \quad \quad \exists \, P \in M_m \text{ s.t. } (P \otimes I_n)\ket{v_1} = \ket{v_2} \Big\},
	\end{align*}
	
	\noindent where the final equality comes applying a complex phase to $\ket{v_1}$ so that $Re(\bra{v_1}X\ket{v_2}) = |\bra{v_1}X\ket{v_2}|$, and from H\"{o}lder's inequality telling us that the supremum is attained when $c_1 = c_2 = 1/\sqrt{2}$.
\end{proof}

Of course, the matrix norm induced by the matrix order is not the only way to define a norm on the various levels of the operator system $V$. What is referred to as the \emph{order norm} of $v \in V_h$ \cite{PT09} is defined via
\begin{align}\label{eq:orderNorm}
	\|v\|_{or} := \inf\{ t \in \mathbb{R} : -te \leq v \leq te \}.
\end{align}

\noindent It is not difficult to see that for a Hermitian element $X = X^* \in M_m(V)$, the matrix norm induced by the matrix order~\eqref{eq:matrixNorm} coincides with the order norm~\eqref{eq:orderNorm}. It was shown in \cite{PT09} how the order norm on $M_m(V)_h$ can be extended (non-uniquely) to a norm on all of $M_m(V)$. Furthermore, there exists a minimal order norm $\|\cdot\|_m$ and a maximal order norm $\|\cdot\|_M$ satisfying $\|\cdot\|_m \leq \|\cdot\|_M \leq 2\|\cdot\|_m$. We will now examine properties of these two norms as well as some other norms (all of which coincide with the order norm on Hermitian elements) on the $k$-super minimal operator system structures.

We will consider an operator $X \in M_m(OMIN^k(M_n))$, where recall by
this we mean $X \in M_m(M_n)$, where the operator system structure on
the space is $OMIN^k(M_n)$. Then we recall the minimal order norm,
decomposition norm $\|\cdot\|_{dec}$, and maximal order norm from
\cite{PT09}:
\begin{align*}
	\big\|X\big\|_m & := \sup \big\{ |f(X)| : f : M_m(OMIN^k(M_n)) \rightarrow \bb{C} \text{ a pos. linear functional s.t. } f(I) = 1 \big\}, \\
	\big\|X\big\|_{dec} & := \inf \left\{ \bigg\| \sum_{i=1}^r |\lambda_i| P_i \bigg\|_{or} : X = \sum_{i=1}^r \lambda_i P_i \text{ with } P_i \in C^{min,k}_m(M_n) \text{ and } \lambda_i \in \mathbb{C} \right\}, \\
	\big\|X\big\|_M & := \inf \left\{ \sum_{i=1}^r |\lambda_i| \big\| H_i \big\|_{or} : X = \sum_{i=1}^r \lambda_i H_i \text{ with } H_i = H_i^* \text{ and } \lambda_i \in \mathbb{C} \right\}.
\end{align*}

\noindent Our next result shows that the minimal order norm can be thought of in terms of vectors with Schmidt rank no greater than $k$, much like the norms $\|\cdot\|_{S(k)}$ and $\|\cdot\|_{M_m(OMIN^k(M_n))}$ introduced earlier.
\begin{thm}\label{thm:charMinNorm}
	Let $X \in M_m(OMIN^k(M_n))$. Then
	\begin{align*}
		\big\|X\big\|_{m} & = \sup_{\ket{v}}\Big\{ \big| \bra{v} X \ket{v} \big| : SR(\ket{v}) \leq k \Big\}.
	\end{align*}
\end{thm}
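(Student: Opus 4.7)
The plan is to dualize the definition of $\|X\|_m$ so that positive linear functionals on $M_m(OMIN^k(M_n))$ become density operators with Schmidt number at most $k$, at which point the supremum over such states reduces to a supremum over pure states of Schmidt rank at most $k$.

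First I would identify positive unital functionals on $M_m(OMIN^k(M_n))$. Every linear functional $f$ on the finite-dimensional space $M_m \otimes M_n$ has the form $f(Y) = \Tr(\rho Y)$ for a unique $\rho \in M_m \otimes M_n$, and a positive functional is necessarily Hermitian-preserving so $\rho = \rho^*$. By Theorem~\ref{thm:opSysConeChar}(a), the cone $C_m^{min,k}$ is exactly the cone of $k$-block positive operators, so $f$ is positive on this cone iff $\rho$ lies in its dual cone. By the duality result cited from \cite{SSZ09} (and used already in the proof of Corollary~\ref{cor:kEntangleBreak}(d)), that dual cone is precisely the cone of (unnormalized) operators with Schmidt number at most $k$. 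The unitality condition $f(I_{mn}) = 1$ becomes $\Tr(\rho) = 1$. Hence
\begin{align*}
\|X\|_m = \sup\bigl\{ |\Tr(\rho X)| : \rho \in M_m \otimes M_n \text{ a density operator with } SN(\rho) \leq k \bigr\}.
\end{align*}

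For the ``$\leq$'' direction I would write an arbitrary such $\rho$ in the form $\rho = \sum_i p_i \ketbra{v_i}{v_i}$ with $\{p_i\}$ a probability distribution and $SR(\ket{v_i}) \leq k$, so that
\begin{align*}
|\Tr(\rho X)| = \Big| \sum_i p_i \bra{v_i} X \ket{v_i} \Big| \leq \sum_i p_i \bigl| \bra{v_i} X \ket{v_i} \bigr| \leq \sup_{\ket{v}}\bigl\{|\bra{v}X\ket{v}| : SR(\ket{v}) \leq k\bigr\}.
\end{align*}
For the ``$\geq$'' direction I would simply note that for any unit vector $\ket{v}$ with $SR(\ket{v}) \leq k$, the rank-one projection $\rho := \ketbra{v}{v}$ is a density operator with $SN(\rho) \leq k$, and $\Tr(\rho X) = \bra{v} X \ket{v}$, so taking the supremum over such $\ket{v}$ yields the reverse inequality.

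The only mildly delicate point is the identification of positive unital functionals with Schmidt-number-$\leq k$ states, which requires both the characterization of $C_m^{min,k}$ from Theorem~\ref{thm:opSysConeChar}(a) and the self-duality style statement from \cite{SSZ09}; once those are in hand, the remainder is a routine convexity argument. Note that the absolute value in the supremum handles the fact that $\bra{v}X\ket{v}$ is generally complex: the phase can always be absorbed either into the choice of $\rho$ in the dual picture, or into the comparison $|\sum_i p_i \bra{v_i} X \ket{v_i}| \leq \sum_i p_i |\bra{v_i} X \ket{v_i}|$ above.
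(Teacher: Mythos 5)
Your proof is correct, but it reaches the ``$\leq$'' direction by a genuinely different route than the paper. You dualize at the level of functionals: combining Theorem~\ref{thm:opSysConeChar}(a) with the cone duality of \cite{SSZ09} (the same duality the paper invokes in Corollary~\ref{cor:kEntangleBreak}(d)), you identify the positive unital functionals on $M_m(OMIN^k(M_n))$ with density operators of Schmidt number at most $k$, and then pass to pure states by convexity. The paper handles ``$\geq$'' exactly as you do (the functional $Y \mapsto \bra{v}Y\ket{v}$ is positive and unital), but for ``$\leq$'' it avoids the duality citation: it computes directly that for Hermitian $X$ the quantity $\sup_{\ket{v}}|\bra{v}X\ket{v}|$ coincides with the matrix-order-induced norm characterized in Theorem~\ref{thm:k_order_norm}, so that the right-hand side of the theorem is itself an order norm, and then appeals to the minimality of $\|\cdot\|_m$ among order norms from \cite{PT09}. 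Your approach buys a uniform and transparent treatment of non-Hermitian $X$ and is independent of Theorem~\ref{thm:k_order_norm}, at the cost of importing the block-positive/Schmidt-number duality; the paper's approach yields as a byproduct the identity $\sup_{\ket{v}}|\bra{v}X\ket{v}| = \|X\|_{M_m(OMIN^k(M_n))}$ on Hermitian elements. The only delicate point in your argument is the one you already flag: the dual-cone identification lives in the real space of Hermitian operators, so you need the (standard, and correctly stated) observation that a positive functional on an operator system is automatically Hermitian, forcing $\rho = \rho^*$. Nothing is missing.
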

\begin{proof}
	Note that if we define a linear functional $f : M_m(OMIN^k(M_n)) \rightarrow \bb{C}$ by $f(X) = \bra{v} X \ket{v}$ for some fixed $\ket{v}$ with $SR(\ket{v}) \leq k$ then it is clear that $f(X) \geq 0$ whenever $X \in C^{min,k}_{m}(M_n)$ (by definition of $k$-block positivity) and $f(I) = 1$. The ``$\geq$'' inequality follows immediately.
	
	To see the other inequality, note that if $X = X^* = (X_{ij})$ and $\ket{v},\ket{w}$ can be written $\ket{v} = \sum_{r=1}^k \alpha_r \ket{a_r} \otimes \ket{b_r}$ and $\ket{w} = \sum_{r=1}^k \gamma_r \ket{c_r} \otimes \ket{b_r}$, then
	\begin{align*}
		\bra{v}X\ket{w} & = \sum_{rs=1}^k \alpha_r \gamma_s \bra{a_r} (\bra{b_r}X_{ij}\ket{b_s})_{ij} \ket{c_s} \\
		 & = (\alpha_1 \bra{a_1}, \cdots, \alpha_k \bra{a_k})\begin{pmatrix}(\bra{b_1}X_{ij}\ket{b_1})_{ij} & \cdots & \bra{b_1}X_{ij}\ket{b_k})_{ij} \\ \vdots & \ddots & \vdots \\ \bra{b_k}X_{ij}\ket{b_1})_{ij} & \cdots & \bra{b_k}X_{ij}\ket{b_k})_{ij}\end{pmatrix}\begin{pmatrix}\gamma_1\ket{c_1} \\ \vdots \\ \gamma_k \ket{c_k}\end{pmatrix}.
	\end{align*}
	
	\noindent Because $X$ is Hermitian, so is the operator in the last line above, so if we take the supremum over all $\ket{v},\ket{w}$ of this form, we may choose $\alpha_i\ket{a_i} = \gamma_i\ket{c_i}$ for all $i$. It follows that
	\begin{align*}
		\sup\Big\{|\bra{v}X\ket{v}| : SR(\ket{v}) \leq k \Big\} = \sup\Big\{\big|\bra{v}X\ket{w}\big| : & \ SR(\ket{v}),SR(\ket{w}) \leq k \text{ and} \\
		& \ \exists \, P \in M_m \text{ s.t. } (P \otimes I_n)\ket{v} = \ket{w} \Big\}.
	\end{align*}
	
	The ``$\leq$'' inequality follows from Theorem~\ref{thm:k_order_norm}, the fact that $\|\cdot\|_{M_m(OMIN^k(M_n))}$ is an order norm, and the minimality of $\|\cdot\|_m$ among order norms.
\end{proof}

The characterization of $\|\cdot\|_m$ given by Theorem~\ref{thm:charMinNorm} can be thought of as in the same vein as \cite[Proposition 5.8]{PT09}, where it was shown that for a unital C$^*$-algebra, $\|\cdot\|_m$ coincides with the numerical radius. In our setting, $\|\cdot\|_m$ can be thought of as a bipartite analogue of the numerical radius, which has been studied in quantum information theory in the $k = 1$ case \cite{GPMSCZ09}.

In the case where $X$ is not Hermitian, equality need not hold between any of the order norms that have been introduced. We now briefly investigate how they compare to each other in general.
\begin{prop}\label{prop:orderNormIneq}
	Let $X \in M_m(OMIN^k(M_n))$. Then
	\begin{align*}
		\big\|X\big\|_m \leq \big\|X\big\|_{M_m(OMIN^k(M_n))} \leq \big\|X\big\|_{dec} \leq \big\|X\big\|_M.
	\end{align*}
\end{prop}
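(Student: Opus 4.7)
The plan is to prove the three inequalities separately, moving from easiest to hardest.

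The first inequality $\|X\|_m\le\|X\|_{M_m(OMIN^k(M_n))}$ follows immediately from what has already been observed: the matrix norm induced by the matrix order agrees with the order norm $\|\cdot\|_{or}$ on Hermitian elements of $M_m(OMIN^k(M_n))$, so it is an order norm extension to all of $M_m(OMIN^k(M_n))$. Since $\|\cdot\|_m$ is the minimal such extension (as recalled from \cite{PT09}), the inequality is automatic.

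For the third inequality $\|X\|_{dec}\le\|X\|_M$, I would convert any Hermitian decomposition $X=\sum_{i=1}^r\lambda_iH_i$ with $H_i=H_i^*$ into a decomposition into cone elements. Writing $t_i:=\|H_i\|_{or}$, the Archimedean property yields $P_i:=\tfrac12(t_ie_m+H_i)\in C^{min,k}_m$ and $Q_i:=\tfrac12(t_ie_m-H_i)\in C^{min,k}_m$ with $P_i+Q_i=t_ie_m$. Then $X=\sum_i\lambda_iP_i+\sum_i(-\lambda_i)Q_i$ is an admissible decomposition for $\|\cdot\|_{dec}$, and
\begin{align*}
	\sum_i|\lambda_i|(P_i+Q_i)=\sum_i|\lambda_i|t_ie_m=\Big(\sum_i|\lambda_i|\|H_i\|_{or}\Big)e_m.
\end{align*}
Since $\|e_m\|_{or}=1$, this gives $\|X\|_{dec}\le\sum_i|\lambda_i|\|H_i\|_{or}$. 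Infimizing over Hermitian decompositions yields $\|X\|_{dec}\le\|X\|_M$.

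The main obstacle is the middle inequality $\|X\|_{M_m(OMIN^k(M_n))}\le\|X\|_{dec}$, which requires producing a positive $2\times 2$ block lift from a positive decomposition of $X$. Given $X=\sum_{i=1}^r\lambda_iP_i$ with $P_i\in C^{min,k}_m$, write $\lambda_i=|\lambda_i|e^{i\theta_i}$ and set $Y:=\sum_i|\lambda_i|P_i$. The key identity is
\begin{align*}
	\begin{pmatrix}Y & X \\ X^* & Y\end{pmatrix}=\sum_{i=1}^r|\lambda_i|\begin{pmatrix}1 & e^{i\theta_i} \\ e^{-i\theta_i} & 1\end{pmatrix}\otimes P_i.
\end{align*}
Each rank-one matrix $\begin{pmatrix}1 & e^{i\theta_i}\\ e^{-i\theta_i}&1\end{pmatrix}=v_iv_i^*$ with $v_i=(1,e^{-i\theta_i})^T$, and writing $W_i:=v_i^*\otimes I_m\in M_{m,2m}$, a direct calculation gives $v_iv_i^*\otimes P_i=W_i^*P_iW_i$. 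The matrix ordering axiom $W^*C^{min,k}_mW\subseteq C^{min,k}_{2m}$ places each summand in $C^{min,k}_{2m}$, so the whole block is positive. Now given any $s>\|Y\|_{or}$, the element $se_m-Y$ is in $V^+\subseteq C^{min,k}_m$, so adding ${\rm diag}(se_m-Y,se_m-Y)\in C^{min,k}_{2m}$ yields
\begin{align*}
	\begin{pmatrix}se_m & X \\ X^* & se_m\end{pmatrix}\in C^{min,k}_{2m},
\end{align*}
and hence $\|X\|_{M_m(OMIN^k(M_n))}\le s$. Taking $s\searrow\|\sum_i|\lambda_i|P_i\|_{or}$ and then infimizing over decompositions of $X$ completes the argument.
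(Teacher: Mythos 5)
Your proof is correct, but it follows a genuinely different route from the paper's. For the first inequality you argue as the paper does (minimality of $\|\cdot\|_m$ among order norms). For the last inequality the paper simply invokes maximality of $\|\cdot\|_M$, whereas you give an explicit construction splitting each Hermitian $H_i$ into $P_i=\tfrac12(t_ie_m+H_i)$ and $Q_i=\tfrac12(t_ie_m-H_i)$; this is a valid, self-contained verification of the same fact. The real divergence is in the middle inequality. The paper works concretely in $OMIN^k(M_n)$: it invokes Theorem~\ref{thm:k_order_norm} to express $\|X\|_{M_m(OMIN^k(M_n))}$ as a supremum of $|\bra{v}X\ket{w}|$ over Schmidt-rank-$k$ vectors with $(P\otimes I_n)\ket{v}=\ket{w}$, and then bounds $|\bra{v}P_i\ket{w}|$ by $\bra{v}P_i\ket{v}$ using the $k$-block positivity of the cone elements (Theorem~\ref{thm:opSysConeChar}). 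You instead give an abstract argument: the identity $\bigl(\begin{smallmatrix}Y & X\\ X^* & Y\end{smallmatrix}\bigr)=\sum_i|\lambda_i|\,W_i^*P_iW_i$ with $W_i=v_i^*\otimes I_m$ places the off-diagonal lift in $C^{min,k}_{2m}$ using only the compatibility axiom $W^*C_mW\subseteq C_{2m}$, after which padding the diagonal by $se_m-Y$ finishes the estimate. Your version is more general --- it proves $\|\cdot\|_{M_m(V)}\le\|\cdot\|_{dec}$ for an arbitrary abstract operator system, with no reference to Schmidt rank or block positivity --- while the paper's version keeps the quantum-information interpretation in view and reuses machinery already established in Theorems~\ref{thm:opSysConeChar} and~\ref{thm:k_order_norm}. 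One small slip: $se_m-Y$ lives in $M_m(V)$, so it belongs to the cone $C^{min,k}_m$ rather than to $V^+$; the conclusion you draw from it is nevertheless correct.
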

\begin{proof}
	The first and last inequalities follow from the fact that $\|\cdot\|_m$ and $\|\cdot\|_M$ are the minimal and maximal order norms, respectively. Thus, all that needs to be shown is that $\big\|X\big\|_{M_m(OMIN^k(M_n))} \leq \big\|X\big\|_{dec}$. To this end, let $\ket{v},\ket{w} \in \bb{C}^m \otimes \bb{C}^n$ with $SR(\ket{v}),SR(\ket{w}) \leq k$ be such that there exists some $P \in M_m$ such that $(P \otimes I_n)\ket{v} = \ket{w}$. Then for any decomposition $X = \sum_{i=1}^r \lambda_i P_i$ with $P_i \in C^{min,k}_m(M_n)$ and $\lambda_i \in \mathbb{C}$ we can use a similar argument to that used in the proof of Theorem~\ref{thm:charMinNorm} to see that $\bra{v}P_i\ket{v} \geq \big|\bra{v}P_i\ket{w}\big| \geq 0$ because each $P_i$ is $k$-block positive (by Theorem~\ref{thm:opSysConeChar}). Thus
	\begin{align*}
		\big|\bra{v}X\ket{w}\big| = \bigg|\sum_{i=1}^r \lambda_i \bra{v}P_i\ket{w}\bigg| \leq \sum_{i=1}^r |\lambda_i| \big|\bra{v}P_i\ket{w}\big| \leq \sum_{i=1}^r |\lambda_i| \bra{v}P_i\ket{v} \leq \bigg\|\sum_{i=1}^r |\lambda_i| P_i \bigg\|_{or}.
	\end{align*}
	
	\noindent Taking the supremum over all such vectors $\ket{v}$ and $\ket{w}$ and the infimum over all such decompositions of $X$ gives the result.
\end{proof}

We know in general that $\|\cdot\|_m$ and $\|\cdot\|_M$ can differ by at most a factor of two. We now present an example some of these norms and to demonstrate that in fact even $\|\cdot\|_m$ and $\|\cdot\|_{M_m(OMIN^k(M_n))}$ can differ by a factor of two.
\begin{exam}\label{ex:min_op_compare}{\rm
	Consider the rank-$1$ operator $X := \ketbra{\phi}{\psi} \in OMIN^k_n(M_n)$, where
	\begin{align*}
		\ket{\phi} := \frac{1}{\sqrt{n}}\sum_{i=0}^{n-1} \ket{i} \otimes \ket{i} \quad \quad \ket{\psi} := \frac{1}{\sqrt{n}}\sum_{i=0}^{n-1} \ket{i} \otimes \ket{i + 1 (\text{mod } n)}.
	\end{align*}
	
	\noindent It is easily verified that if $\ket{v} = \sum_{i=1}^k \alpha_i \ket{a_i} \otimes \ket{b_i}$ then
	\begin{align*}
		\big|\bra{v}X\ket{v}\big| & = \frac{1}{n} \bigg|\sum_{rs=1}^k \sum_{ij=0}^{n-1} \alpha_r \alpha_s \braket{a_r}{i} \braket{b_r}{i} \braket{j}{a_s} \braket{j + 1 (\text{mod } n)}{b_s}\bigg| \\
		& = \frac{1}{n} \bigg| \Tr\Big(\sum_{r=1}^k \alpha_r \overline{\ket{a_r}}\bra{b_r} \Big) \cdot \sum_{j=0}^{n-1} \bra{j} \Big(\sum_{r=1}^k \alpha_r \overline{\ket{a_r}}\bra{b_r} \Big) \ket{j + 1 (\text{mod } n)} \bigg|.
	\end{align*}
	
	\noindent In the final line above we have the trace of an operator with rank at most $k$, multiplied by the sum of the elements on the superdiagonal of the same operator, subject to the constraint that the Frobenius norm of that operator is equal to $1$. It follows that $\big|\bra{v}X\ket{v}\big| \leq \frac{k}{2n}$ and so $\|X\|_m = \frac{k}{2n}$ (equality can be seen by taking $\ket{v} = \sum_{i=0}^{k-1} \frac{1}{\sqrt{2k}}\ket{i} \otimes (\ket{i} + \ket{i + 1 (\text{mod } n)})$).
	
	To see that $\|X\|_{op}$ is twice as large, consider $\ket{v} = \frac{1}{\sqrt{k}}\sum_{i=0}^{k-1} \ket{i} \otimes \ket{i}$ and $\ket{w} = \frac{1}{\sqrt{k}}\sum_{i=0}^{k-1} \ket{i} \otimes \ket{i + 1 (\text{mod } n)}$. Then it is easily verified that $\bra{v}X\ket{w} = \frac{k}{n}$. Moreover, if $P \in M_m$ is the cyclic permutation matrix such that $P\ket{i} = \ket{i - 1 (\text{mod } n)}$ for all $i$ then $(P \otimes I_n)\ket{v} = \ket{w}$, showing that $\|X\|_{op} \geq \frac{k}{n}$.
}\end{exam}

\section{Contractive Maps as Separability Criteria}\label{sec:CBNorm}

We now investigate the completely bounded version of the $k$-minimal operator space norms and $k$-super minimal operator system norms that have been introduced. We will see that these completely bounded norms can be used to provide a characterization of Schmidt number analogous to its characterization in terms of $k$-positive maps.

Given operator spaces $V$ and $W$, the completely bounded (CB) norm from $V$ to $W$ is defined by
\begin{align*}
	\big\|\Phi\big\|_{CB(V,W)} := \sup_{m \geq 1}\Big\{ \big\|(id_m \otimes \Phi)(X)\big\|_{M_m(W)} : X \in M_m(V) \text{ with } \big\|X\big\|_{M_m(V)} \leq 1 \Big\}.
\end{align*}

\noindent Clearly this reduces to the standard completely bounded norm of $\Phi$ in the case when $V = M_r$ and $W = M_n$. We will now characterize this norm in the case when $V = M_r$ and $W = MIN^k(M_n)$. In particular, we will see that the $k$-minimal completely bounded norm of $\Phi$ is equal to the perhaps more familiar operator norm $\big\|id_k \otimes \Phi\big\|$ -- that is, the CB norm in this case stabilizes in much the same way that the standard CB norm stabilizes (indeed, in the $k = n$ case we get exactly the standard CB norm). This result was originally proved in \cite{OR04}, but we prove it here using elementary means for completeness and clarity, and also because we will subsequently need the operator system version of the result, which can be proved in the same way.

\begin{thm}\label{thm:mainCB}
	Let $\Phi : M_r \rightarrow M_n$ be a linear map and let $1 \leq k \leq n$. Then
	\begin{align*}
		\big\|id_k \otimes \Phi\big\| & = \big\|\Phi\big\|_{CB(M_r,MIN^k(M_n))}.
	\end{align*}
\end{thm}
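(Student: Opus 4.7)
My plan is to reduce the supremum defining the CB norm to the case $m=k$ by exploiting isometries coming from the Schmidt decomposition, then use Theorem~\ref{thm:MINkChar} to replace the matrix-norm expression with the $S(k)$-norm, which collapses to the operator norm on $M_k \otimes M_n$.

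The easy direction ($\|id_k\otimes\Phi\| \leq \|\Phi\|_{CB(M_r,MIN^k(M_n))}$). Take $m=k$ in the defining supremum. Since every vector $\ket{v}\in\bb{C}^k\otimes\bb{C}^n$ automatically has $SR(\ket{v})\leq k$, Theorem~\ref{thm:MINkChar} yields $\|Y\|_{M_k(MIN^k(M_n))} = \|Y\|_{S(k)} = \|Y\|$ for every $Y\in M_k\otimes M_n$. Hence $\|(id_k\otimes\Phi)(X)\|_{M_k(MIN^k(M_n))} = \|(id_k\otimes\Phi)(X)\|$ for all $X\in M_k(M_r)$ with $\|X\|\leq 1$, giving the inequality.

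The harder direction ($\|\Phi\|_{CB(M_r,MIN^k(M_n))} \leq \|id_k\otimes\Phi\|$). Fix $m\geq 1$ and $X\in M_m(M_r)$ with $\|X\|\leq 1$, and let $\ket{v},\ket{w}\in\bb{C}^m\otimes\bb{C}^n$ with $SR(\ket{v}),SR(\ket{w})\leq k$. Writing Schmidt decompositions $\ket{v}=\sum_{i=1}^k\alpha_i\ket{a_i}\otimes\ket{b_i}$ and $\ket{w}=\sum_{i=1}^k\beta_i\ket{c_i}\otimes\ket{d_i}$, define isometries $V,W:\bb{C}^k\to\bb{C}^m$ by $V\ket{i}=\ket{a_i}$ and $W\ket{i}=\ket{c_i}$, together with unit vectors $\ket{\tilde v}=\sum_i\alpha_i\ket{i}\otimes\ket{b_i}$ and $\ket{\tilde w}=\sum_i\beta_i\ket{i}\otimes\ket{d_i}$ in $\bb{C}^k\otimes\bb{C}^n$, so that $\ket{v}=(V\otimes I_n)\ket{\tilde v}$ and $\ket{w}=(W\otimes I_n)\ket{\tilde w}$. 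A direct computation on elementary tensors $X=\sum_j A_j\otimes B_j$ gives the key absorption identity
\begin{align*}
(V^*\otimes I_n)(id_m\otimes\Phi)(X)(W\otimes I_n) \;=\; (id_k\otimes\Phi)\bigl((V^*\otimes I_r)X(W\otimes I_r)\bigr).
\end{align*}
Setting $Y:=(V^*\otimes I_r)X(W\otimes I_r)\in M_k(M_r)$, this yields $\bra{v}(id_m\otimes\Phi)(X)\ket{w} = \bra{\tilde v}(id_k\otimes\Phi)(Y)\ket{\tilde w}$, and $\|Y\|\leq\|X\|\leq 1$ because $V$ and $W$ are isometries. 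Therefore
\begin{align*}
\bigl|\bra{v}(id_m\otimes\Phi)(X)\ket{w}\bigr| \;\leq\; \bigl\|(id_k\otimes\Phi)(Y)\bigr\| \;\leq\; \bigl\|id_k\otimes\Phi\bigr\|.
\end{align*}
Taking the supremum over $\ket{v},\ket{w}$ (which by Theorem~\ref{thm:MINkChar} computes $\|(id_m\otimes\Phi)(X)\|_{M_m(MIN^k(M_n))}$) and then over $m$ and $X$ gives the desired inequality.

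I do not anticipate a genuine obstacle: the whole argument pivots on the Schmidt-decomposition-induced isometries and the $S(k)$-norm collapse in dimension $k$. The one place to be careful is verifying the absorption identity and that $\|Y\|\leq 1$; both are routine, but they are where the stabilization at $m=k$ really happens. The same argument will transfer verbatim to the operator system setting needed later by using Theorem~\ref{thm:k_order_norm} in place of Theorem~\ref{thm:MINkChar}, with the minor additional observation that the compatibility condition $(P\otimes I_n)\ket{\tilde v}=\ket{\tilde w}$ is preserved (with $P$ replaced by $WPV^*$ or similar) under the isometric reduction.
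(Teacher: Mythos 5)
Your proposal is correct and follows essentially the same route as the paper's proof: reduce to $m=k$ via the Schmidt-decomposition isometries $V,W$, use the absorption identity to pull them through $id_m\otimes\Phi$, and invoke Theorem~\ref{thm:MINkChar} to identify the $k$-minimal matrix norms with the $S(k)$-norms. The only cosmetic difference is that you work with arbitrary $X$ and take suprema at the end rather than choosing optimizers first, and (like the paper) you should note that the case $m<k$ is subsumed by $m=k$ since the matrix norms are nondecreasing in $m$.
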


\begin{proof}
	To see the ``$\leq$'' inequality, simply notice that $\big\|Y\big\|_{M_k(MIN^k(M_n))} = \big\|Y\big\|_{M_k(M_n)}$ for all $Y \in M_k(M_n)$. We thus just need to show the ``$\geq$'' inequality, which we do in much the same manner as Smith's original proof that the standard CB norm stabilizes.
	
	First, use Theorem~\ref{thm:MINkChar} to write
	\begin{align}\label{eq:cb_k_sup}
		\big\|\Phi\big\|_{CB(M_r,MIN^k(M_n))} = \sup_{m \geq 1}\Big\{ \big\|(id_m \otimes \Phi)(X)\big\|_{S(k)} : \big\|X\big\| \leq 1 \Big\}.
	\end{align}
	
	\noindent Now fix $m \geq k$ and a pure state $\ket{v} \in \bb{C}^m \otimes \bb{C}^n$ with $SR(\ket{v}) \leq k$. We begin by showing that there exists an isometry $V : \bb{C}^k \rightarrow \bb{C}^m$ and a state $\ket{\tilde{v}} \in \bb{C}^k \otimes \bb{C}^n$ such that $(V \otimes I_n)\ket{\tilde{v}} = \ket{v}$. To this end, write $\ket{v}$ in its Schmidt Decomposition $\ket{v} = \sum_{i=1}^{k} \alpha_i \ket{a_i} \otimes \ket{b_i}$. Because $k \leq m$, we may define an isometry $V : \bb{C}^k \rightarrow \bb{C}^m$ by $V\ket{i} = \ket{a_i}$ for $1 \leq i \leq k$. If we define $\ket{\tilde{v}} := \sum_{i=1}^{k} \alpha_i \ket{i} \otimes \ket{b_i}$ then $(V \otimes I_n)\ket{\tilde{v}} = \ket{v}$, as desired.
	
	Now choose $\tilde{X} \in M_m(M_r)$ such that $\big\|\tilde{X}\big\| \leq 1$ and the supremum~\eqref{eq:cb_k_sup} (holding $m$ fixed) is attained by $\tilde{X}$. Then choose vectors $\ket{v},\ket{w} \in \bb{C}^m \otimes \bb{C}^n$ with $SR(\ket{v}),SR(\ket{w}) \leq k$ such that
	\begin{align*}
		\big\| (id_m \otimes \Phi)(\tilde{X}) \big\|_{S(k)} = \big|\bra{v} (id_m \otimes \Phi)(\tilde{X}) \ket{w}\big|.
	\end{align*}
	
	\noindent As we saw earlier, there exist isometries $V,W : \bb{C}^k \rightarrow \bb{C}^m$ and unit vectors $\ket{\tilde{v}},\ket{\tilde{w}} \in \bb{C}^k \otimes \bb{C}^n$ such that $(V \otimes I_n)\ket{\tilde{v}} = \ket{v}$ and $(W \otimes I_n)\ket{\tilde{w}} = \ket{w}$. Thus
	\begin{align*}
		\big\| (id_m \otimes \Phi)(\tilde{X}) \big\|_{S(k)} & = \big|\bra{\tilde{v}}(V^* \otimes I_n) (id_m \otimes \Phi)(\tilde{X}) (W \otimes I_n)\ket{\tilde{w}} \big| \\
		& = \big|\bra{\tilde{v}} (id_k \otimes \Phi)((V^* \otimes I_r)\tilde{X}(W \otimes I_r))\ket{\tilde{w}} \big| \\
		& \leq \big\| (id_k \otimes \Phi)((V^* \otimes I_r)\tilde{X}(W \otimes I_r)) \big\| \\
		& \leq \sup \Big\{ \big\| (id_k \otimes \Phi)(X) \big\| : X \in M_k(M_r) \text{ with } \big\|X\big\| \leq 1 \Big\},
	\end{align*}
	
\noindent where the final inequality comes from the fact that $\big\|(V^* \otimes I_r)\tilde{X}(W \otimes I_r)\big\| \leq 1$. The desired inequality follows, completing the proof.
\end{proof}

We will now show that the operator system versions of these norms have applications to testing separability of quantum states. To this end, notice that if we instead consider the completely bounded norm from $M_r$ to the $k$-super minimal operator \emph{systems} on $M_n$, then a statement that is analogous to Theorem~\ref{thm:mainCB} holds. Its proof can be trivially modified to show that if $\Phi : M_r \rightarrow M_n$ and $1 \leq k \leq n$ then
\begin{align}\begin{split}\label{eq:opSysStab}
	& \sup\Big\{ \big|\bra{v}(id_k \otimes \Phi)(X)\ket{v}\big| : \big\|X\big\| \leq 1, X = X^* \Big\} \\
	= & \sup_{m \geq 1}\Big\{ \big|\bra{v}(id_m \otimes \Phi)(X)\ket{v}\big| : \big\|X\big\| \leq 1, X = X^*, SR(\ket{v}) \leq k \Big\}.
\end{split}
\end{align}

Equation~\eqref{eq:opSysStab} can be thought of as a stabilization result for the completely bounded version of the norm described by Theorem~\ref{thm:charMinNorm}. We could also have picked one of the other order norms on the $k$-super minimal operator systems to work with, but from now on we will be working exclusively with Hermiticity-preserving maps $\Phi$. So by the fact that all of the operator system order norms are equal on Hermitian operators, it follows that these versions of their completely bounded norms are all equal as well.

Before proceeding, we will need to define some more notation. If $\Phi : M_n \rightarrow M_r$ is a linear map, then we define a Hermitian version of the induced trace norm of $\Phi$:
\begin{align*}
	\big\|\Phi\big\|_{tr}^H := \sup\Big\{ \big\| \Phi(X) \big\|_{tr} : \big\|X\big\|_{tr} \leq 1, X = X^* \Big\}.
\end{align*}

\noindent Because of convexity of the trace norm, it is clear that the above norm is unchanged if instead of being restricted to Hermitian operators, the supremum is restricted to positive operators or even just projections. Now by taking the dual of the left and right norms described by Equation~\eqref{eq:opSysStab}, and using the fact that the operator norm is dual to the trace norm, we arrive at the following corollary:

\begin{cor}\label{cor:mainCB}
	Let $\Phi : M_n \rightarrow M_r$ be a Hermiticity-preserving linear map and let $1 \leq k \leq n$. Then
	\begin{align*}
		\big\|id_k \otimes \Phi\big\|^{H}_{tr} = \sup_{m \geq 1}\Big\{ \big\|(id_m \otimes \Phi)(\rho)\big\|_{tr} : \rho \in M_m \otimes M_n \text{ with } SN(\rho) \leq k \Big\}.
	\end{align*}
\end{cor}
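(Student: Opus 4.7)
The proof is a short duality argument starting from the stabilization identity~\eqref{eq:opSysStab}. Since \eqref{eq:opSysStab} is stated for maps into $M_n$, whereas the corollary concerns $\Phi : M_n \to M_r$, I would first apply \eqref{eq:opSysStab} to the Hilbert--Schmidt adjoint $\Phi^\dagger : M_r \to M_n$ (which is Hermiticity-preserving precisely when $\Phi$ is), and then dualize each side. The three facts I would rely on are: (i) for a Hermitian operator $Y$, $\sup_{\|v\|=1}|\bra{v}Y\ket{v}| = \|Y\|$; (ii) the operator and trace norms are dual under the trace pairing, and this duality restricts cleanly to the real subspace of Hermitian matrices; and (iii) $\Tr(X\,(id_m\otimes\Phi)(\rho)) = \Tr((id_m\otimes\Phi^\dagger)(X)\,\rho)$.

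On the left-hand side of \eqref{eq:opSysStab} any unit vector $\ket{v}\in \bb{C}^k\otimes\bb{C}^n$ automatically satisfies $SR(\ket{v})\leq k$, so the supremum over $\ket{v}$ collapses by (i) to $\sup\{\|(id_k\otimes\Phi^\dagger)(X)\| : \|X\|\leq 1,\,X=X^*\}$. This is the Hermitian operator-to-operator norm of $id_k\otimes\Phi^\dagger$, and by (ii) it equals the Hermitian trace-to-trace norm of its adjoint $id_k\otimes\Phi$, i.e.\ exactly $\|id_k\otimes\Phi\|_{tr}^H$.

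For the right-hand side of \eqref{eq:opSysStab}, fix $m$ and a vector $\ket{v}$ with $SR(\ket{v})\leq k$. Using (iii), $|\bra{v}(id_m\otimes\Phi^\dagger)(X)\ket{v}| = |\Tr(X\,(id_m\otimes\Phi)(\ketbra{v}{v}))|$, and the operator $(id_m\otimes\Phi)(\ketbra{v}{v})$ is Hermitian since $\Phi$ preserves Hermiticity. Taking the supremum over $X = X^*$ with $\|X\|\leq 1$ and applying (ii) then yields $\|(id_m\otimes\Phi)(\ketbra{v}{v})\|_{tr}$. Thus the right-hand side of \eqref{eq:opSysStab} equals $\sup_{m\geq 1}\sup\{\|(id_m\otimes\Phi)(\ketbra{v}{v})\|_{tr} : SR(\ket{v})\leq k\}$.

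To finish I would pass from rank-one Schmidt-$\leq k$ states to arbitrary $\rho$ with $SN(\rho)\leq k$: the ``$\leq$'' direction is immediate because rank-one Schmidt-$\leq k$ projections are admissible $\rho$, while ``$\geq$'' follows from the convex-combination definition of $SN(\rho)\leq k$ together with the triangle inequality for $\|\cdot\|_{tr}$. The only points where I expect to have to be careful are the bookkeeping between $\Phi$ and $\Phi^\dagger$ and the verification that operator/trace duality genuinely restricts to Hermitian matrices (which it does, since the Hermitian matrices form a real-linear self-dual subspace under the trace inner product); neither of these is a real obstacle.
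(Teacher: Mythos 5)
Your argument is correct and is essentially the paper's own proof: the paper derives the corollary in one line by dualizing both sides of Equation~\eqref{eq:opSysStab} via the trace-norm/operator-norm duality, which is exactly what you carry out. You simply make explicit the details the paper leaves implicit --- the bookkeeping with the adjoint $\Phi^\dagger$, the restriction of the duality to Hermitian matrices, and the convexity step passing from Schmidt-rank-$\leq k$ pure states to general $\rho$ with $SN(\rho)\leq k$ --- all of which check out.
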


We will now characterize the Schmidt number of a state $\rho$ in terms of maps that are contractive in the norm described by Corollary~\ref{cor:mainCB}. Our result generalizes the separability test of \cite{HHH06}. We begin with a simple lemma that will get us most of the way to the linear contraction characterization of Schmidt number. The $k = 1$ version of this lemma appeared as \cite[Lemma 1]{HHH06}, though our proof is more straightforward.

\begin{lemma}\label{lem:contrac}
	Let $\rho \in M_m \otimes M_n$ be a density operator. Then $SN(\rho) \leq k$ if and only if $(id_m \otimes \Phi)(\rho) \geq 0$ for all trace-preserving $k$-positive maps $\Phi : M_n \rightarrow M_{2n}$.
\end{lemma}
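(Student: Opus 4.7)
The plan is to prove each direction separately, using the already-cited Terhal–Horodecki characterization (a state has Schmidt number at most $k$ iff it remains positive under tensoring the identity with any $k$-positive map) and doing the real work in the ``if'' direction via a dimension-doubling construction that promotes an arbitrary $k$-positive map into a trace-preserving $k$-positive map into $M_{2n}$.

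For the ``only if'' direction, I would start from a decomposition $\rho = \sum_i p_i \ketbra{v_i}{v_i}$ with $SR(\ket{v_i}) \leq k$. Writing each $\ket{v_i}$ in its Schmidt form and collecting the left Schmidt vectors into an isometry $V_i:\bb{C}^k\to\bb{C}^m$, we get $\ket{v_i} = (V_i\otimes I_n)\ket{\tilde v_i}$ with $\ket{\tilde v_i}\in\bb{C}^k\otimes\bb{C}^n$. Then
\begin{align*}
(id_m\otimes\Phi)(\ketbra{v_i}{v_i}) = (V_i\otimes I_{2n})(id_k\otimes\Phi)(\ketbra{\tilde v_i}{\tilde v_i})(V_i^*\otimes I_{2n}) \geq 0,
\end{align*}
since $\Phi$ is $k$-positive. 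Summing against the $p_i$ gives $(id_m\otimes\Phi)(\rho)\geq 0$; trace preservation is not used here, and no restriction on the codomain is needed.

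For the ``if'' direction, the aim is to reduce to the Terhal–Horodecki test. So let $\Psi: M_n\to M_n$ be an arbitrary nonzero $k$-positive map; I want to deduce $(id_m\otimes\Psi)(\rho)\geq 0$. Rescaling, I may assume $\Psi^*(I_n)\leq I_n$ (since $\Psi^*(I_n)\geq 0$ and positive scaling preserves $k$-positivity and also preserves the sign of $(id_m\otimes\Psi)(\rho)$). The key step is to define a ``completion'' map
\begin{align*}
\Omega: M_n\to M_n, \qquad \Omega(X) := \tfrac{1}{n}\Tr\bigl((I_n-\Psi^*(I_n))X\bigr)\, I_n.
\end{align*}
Since $I_n-\Psi^*(I_n)\geq 0$, the linear functional $X\mapsto \Tr((I_n-\Psi^*(I_n))X)$ is a positive functional, hence CP, and tensoring with the positive constant operator $I_n/n$ keeps $\Omega$ completely positive. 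Now set $\Phi:M_n\to M_{2n}$ by $\Phi(X):=\Psi(X)\oplus\Omega(X)$. Block-diagonally, $\Phi$ inherits $k$-positivity from the $k$-positivity of $\Psi$ and the complete positivity of $\Omega$, and a direct trace computation gives $\Tr(\Phi(X)) = \Tr(\Psi(X)) + \Tr(X) - \Tr(\Psi^*(I_n)X) = \Tr(X)$, so $\Phi$ is trace-preserving.

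Applying the hypothesis to $\Phi$ yields $(id_m\otimes\Phi)(\rho)\geq 0$, and extracting the upper-left block gives $(id_m\otimes\Psi)(\rho)\geq 0$. Since $\Psi$ was an arbitrary $k$-positive map $M_n\to M_n$, the Terhal–Horodecki/Ran–Arveson criterion (the characterization of Schmidt number via $k$-positive maps cited in the introduction) finishes the proof. The main obstacle is precisely the trace-preservation constraint: the doubling of the target dimension from $n$ to $2n$ is used exactly to leave room to add the CP correction $\Omega$ without disturbing the $k$-positivity we started with.
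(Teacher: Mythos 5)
Your proof is correct and follows the same overall strategy as the paper's: the ``only if'' direction is routine (the paper simply declares it clear; your isometry argument is a fine way to spell it out), and the ``if'' direction reduces to the Terhal--Horodecki/Ranade--Ali characterization by embedding an arbitrary $k$-positive $\Psi : M_n \rightarrow M_n$ as the upper block of a trace-preserving $k$-positive map into $M_{2n} \cong M_n \oplus M_n$. The one genuine difference is the choice of the completing block. The paper rescales $\Psi$ so that $\big\|\Psi\big\|_{tr} \leq \tfrac{1}{n}$ and completes with $\Omega_0 - \Psi$, where $\Omega_0$ is the completely depolarizing channel; you instead normalize so that $\Psi^*(I_n) \leq I_n$ and complete with $\Omega(X) = \tfrac{1}{n}\Tr\bigl((I_n - \Psi^*(I_n))X\bigr)I_n$. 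Your choice is arguably cleaner: $\Omega$ is manifestly completely positive (its Choi matrix is $(I_n - \Psi^*(I_n))^{T} \otimes \tfrac{1}{n}I_n \geq 0$), so the $k$-positivity of $\Psi \oplus \Omega$ is immediate, whereas the paper's completion $\Omega_0 - \Psi$ is only verified to be a positive map, and one must check (or arrange by a stronger rescaling) that it is actually $k$-positive for the direct sum to be $k$-positive. Both constructions give trace preservation by essentially the same computation, and both conclude identically by compressing $(id_m \otimes \Phi)(\rho) \geq 0$ to its upper-left block.
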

\begin{proof}
	The ``only if'' implication of the proof is clear, so we only need to establish that if $SN(\rho) > k$ then there is a trace-preserving $k$-positive map $\Phi : M_n \rightarrow M_{2n}$ such that $(id_m \otimes \Phi)(\rho) \not\geq 0$. To this end, let $\Psi : M_n \rightarrow M_n$ be a $k$-positive map such that $(id_m \otimes \Psi)(\rho) \not\geq 0$ (which we know exists by \cite{TH00,RA07}). Without loss of generality, $\Psi$ can be scaled so that $\big\|\Psi\big\|_{tr} \leq \frac{1}{n}$. Then if $\Omega : M_n \rightarrow M_n$ is the completely depolarizing channel defined by $\Omega(\rho) = \frac{1}{n}I_n$ for all $\rho \in M_n$, it follows that $(\Omega - \Psi)(\rho) \geq 0$ for all $\rho \geq 0$ and so the map $\Phi := \Psi \oplus (\Omega - \Psi) : M_n \rightarrow M_{2n}$ is $k$-positive (and easily seen to be trace-preserving). Because $(id_m \otimes \Psi)(\rho) \not\geq 0$, we have $(id_m \otimes \Phi)(\rho) \not\geq 0$ as well, completing the proof.
\end{proof}

We are now in a position to prove the main result of this section. Note that in the $k = 1$ case of the following theorem it is not necessary to restrict attention to Hermiticity-preserving linear maps $\Phi$ (and indeed this restriction was not made in \cite{HHH06}), but our proof for arbitrary $k$ does make use of Hermiticity-preservation.
\begin{thm}
	Let $\rho \in M_m \otimes M_n$ be a density operator. Then $SN(\rho) \leq k$ if and only if $\big\|(id_m \otimes \Phi)(\rho)\big\|_{tr} \leq 1$ for all Hermiticity-preserving linear maps $\Phi : M_n \rightarrow M_{2n}$ with $\big\| id_k \otimes \Phi \big\|_{tr}^{H} \leq 1$.
\end{thm}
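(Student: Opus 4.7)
The plan is to combine Corollary~\ref{cor:mainCB} with Lemma~\ref{lem:contrac}. The ``only if'' direction is essentially immediate: if $SN(\rho) \leq k$ and $\Phi$ is Hermiticity-preserving with $\big\|id_k \otimes \Phi\big\|_{tr}^H \leq 1$, then since $\rho$ has $\|\rho\|_{tr} = 1$ and $SN(\rho) \leq k$, Corollary~\ref{cor:mainCB} (applied after rescaling, or noting the supremum on the right is implicitly over $\|\rho\|_{tr} \leq 1$) yields $\big\|(id_m \otimes \Phi)(\rho)\big\|_{tr} \leq \big\|id_k \otimes \Phi\big\|_{tr}^H \leq 1$.

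For the ``if'' direction I argue the contrapositive. Suppose $SN(\rho) > k$. By Lemma~\ref{lem:contrac} there is a trace-preserving $k$-positive map $\Phi : M_n \rightarrow M_{2n}$ for which $(id_m \otimes \Phi)(\rho) \not\geq 0$. I claim this $\Phi$ is the required witness. Being $k$-positive (with $k \geq 1$), $\Phi$ is in particular Hermiticity-preserving.

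The key observation is that any trace-preserving $k$-positive map automatically satisfies $\big\|id_k \otimes \Phi\big\|_{tr}^H \leq 1$. To see this, fix any Hermitian $X \in M_k \otimes M_n$ with $\|X\|_{tr} \leq 1$ and write its Jordan decomposition $X = X_+ - X_-$ with $X_\pm \geq 0$ having orthogonal supports. By $k$-positivity, $(id_k \otimes \Phi)(X_\pm) \geq 0$, so by the triangle inequality and trace-preservation,
\begin{align*}
	\big\|(id_k \otimes \Phi)(X)\big\|_{tr} \leq \Tr\big((id_k \otimes \Phi)(X_+)\big) + \Tr\big((id_k \otimes \Phi)(X_-)\big) = \Tr(X_+) + \Tr(X_-) = \|X\|_{tr} \leq 1.
\end{align*}

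It remains to show $\big\|(id_m \otimes \Phi)(\rho)\big\|_{tr} > 1$. Let $\tau := (id_m \otimes \Phi)(\rho)$, which is Hermitian but, by choice of $\Phi$, not positive. Writing $\tau = \tau_+ - \tau_-$ in Jordan form, we have $\Tr(\tau_-) > 0$ since $\tau \not\geq 0$; meanwhile trace-preservation gives $\Tr(\tau_+) - \Tr(\tau_-) = \Tr(\tau) = \Tr(\rho) = 1$. Therefore $\|\tau\|_{tr} = \Tr(\tau_+) + \Tr(\tau_-) = 1 + 2\Tr(\tau_-) > 1$, completing the contrapositive. There is no significant obstacle here; the conceptual content is packaged entirely in the observation that trace-preserving $k$-positivity already suffices to give contractivity in the $\big\|id_k \otimes \,\cdot\,\big\|_{tr}^H$ norm, so Lemma~\ref{lem:contrac} directly supplies the desired witness.
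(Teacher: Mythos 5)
Your proposal is correct and follows essentially the same route as the paper: the ``only if'' direction via Corollary~\ref{cor:mainCB}, and the ``if'' direction by observing that trace-preserving $k$-positive maps lie inside the class of Hermiticity-preserving maps with $\big\|id_k\otimes\Phi\big\|_{tr}^H\leq 1$ and then invoking Lemma~\ref{lem:contrac}. You merely spell out explicitly (via Jordan decompositions) two steps the paper leaves implicit, namely the contractivity of trace-preserving $k$-positive maps and the fact that a trace-one Hermitian operator has trace norm exceeding $1$ exactly when it is not positive.
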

\begin{proof}
	To see the ``only if'' implication, simply use Corollary~\ref{cor:mainCB} with $r = 2n$.
	
	For the ``if'' implication, observe that any positive trace-preserving map $\Psi$ is necessarily Hermiticity-preserving and has $\big\|\Psi\big\|_{tr}^H \leq 1$. Letting $\Psi = id_k \otimes \Phi$ then shows that any $k$-positive trace-preserving map $\Phi$ has $\big\|id_k \otimes \Phi\big\|_{tr}^H \leq 1$. Thus the set of Hermiticity-preserving linear maps $\Phi$ with $\big\|id_k \otimes \Phi\big\|_{tr}^H \leq 1$ contains the set of $k$-positive trace-preserving maps, so the ``if'' implication follows from Lemma~\ref{lem:contrac}.
\end{proof}

\vspace{0.1in}

\noindent{\bf Acknowledgements.} Thanks are extended to Marius Junge for drawing our attention to the $k$-minimal and $k$-maximal operator space structures. N.J. was supported by an NSERC Canada Graduate Scholarship and the University of Guelph Brock Scholarship. D.W.K. was supported by Ontario Early Researcher Award 048142, NSERC Discovery Grant 400160 and NSERC Discovery Accelerator Supplement 400233. R.P. was supported by NSERC Discovery Grant 400096.


\end{document}